\newenvironment{proof}{\noindent {\bf Proof:}}{\hfill $\Box$}
\newtheorem{theorem}{Theorem}
\newtheorem{lemma}{Lemma}
\newtheorem{remark}{Remark}
\def\BState{\State\hskip-\ALG@thistlm}
\newcommand{\bs}{\boldsymbol}
\newcommand{\mr}[1]{\mathrm{#1}}
\newcommand{\D}{\mathcal{D}}
\newcommand{\Mc}{\mathcal{M}}
\newcommand{\Kc}{\mathcal{K}}
\newcommand{\Dc}{\mathcal{D}}
\newcommand{\Yf}{\mathbf{Y}}
\newcommand{\Uf}{\mathbf{U}}
\newcommand{\Zf}{\mathbf{Z}}
\newcommand{\Kf}{\mathbf{K}}
\newcommand{\Xf}{\mathbf{X}}
\newcommand{\Rb}{\mathbb{R}}
\newcommand{\Nb}{\mathbb{N}}
\newcommand{\Ind}{\mathbb{I}}
\newcommand \cel[2]{\{#1,\ldots,#2\}}
\newcommand*\interior[1]{#1^{\mathsf{o}}}
\title{\bf Moments and convex optimization for analysis and control of nonlinear partial differential equations}
\begin{document}

\author{Milan Korda$^1$, Didier Henrion$^{2,3,4}$, Jean Bernard Lasserre$^2$}

\footnotetext[1]{University of California, Santa Barbara,\; {\tt milan.korda@engineering.ucsb.edu}}
\footnotetext[2]{CNRS; LAAS; 7 avenue du colonel Roche, F-31400 Toulouse; France. {\tt henrion@laas.fr},  {\tt lasserre@laas.fr}}
\footnotetext[3]{Universit\'e de Toulouse; LAAS; F-31400 Toulouse; France.}
\footnotetext[4]{Faculty of Electrical Engineering, Czech Technical University in Prague,
Technick\'a 2, CZ-16626 Prague, Czech Republic.}

\date{ \today}

\maketitle

\begin{abstract}
This work presents a convex-optimization-based framework for analysis and control of nonlinear partial differential equations. The approach uses a particular weak embedding of the nonlinear PDE, resulting in a \emph{linear} equation in the space of Borel measures. This equation is then used as a constraint of an infinite-dimensional linear programming problem (LP). This LP is then approximated by a hierarchy of convex, finite-dimensional, semidefinite programming problems (SDPs). In the case of analysis of uncontrolled PDEs, the solutions to these SDPs provide  bounds on a specified, possibly nonlinear, functional of the solutions to the PDE; in the case of PDE control, the solutions to these SDPs provide bounds on the optimal value of a given optimal control problem as well as  suboptimal feedback controllers. The entire approach is based purely on convex optimization and does not rely on spatio-temporal gridding, even though the PDE addressed can be fully nonlinear. The approach is applicable to a very broad class nonlinear PDEs with polynomial data.  Computational complexity is analyzed and several complexity reduction procedures are described. Numerical examples demonstrate the approach.
\end{abstract}

\begin{flushleft}\small
{\bf Keywords:} Partial differential equations, Occupation measure, Optimal control, Semidefinite programming, Convex optimization.
\end{flushleft}

\section{Introduction}
This paper considers the problem of analysis and optimal control of nonlinear partial differential equations (PDEs) with polynomial data. The approach proceeds by embedding the nonlinear PDE into an infinite-dimensional space of Borel measures, which results in a \emph{linear} equation involving the so called occupation and boundary measures as variables. This equation, along with a user-specified objective functional, gives rise to an infinite-dimensional linear programming problem (LP) in the space of Borel measures. In the uncontrolled case, the solution to this infinite-dimensional LP provides a bound on a given functional (e.g., energy) of  the solution to the PDE. In the controlled case, the solution to this LP provides a bound on the optimal value of the optimal control problem considered.

This infinite-dimensional LP is then approximated using the classical Lasserre's hierarchy of finite-dimensional \emph{convex} semidefinite programming (SDP) relaxations. In the uncontrolled case, this hierarchy provides a monotonous sequence of upper or lower bounds on the optimum of the infinite dimensional LP and hence also upper and lower bounds on the given functional of the solution to the PDE. In the controlled case, the solutions to the SDPs provide bounds on the optimal value of the optimal control problem considered as well as a sequence of suboptimal polynomial feedback controllers. The sequence of SDP relaxations is proven to converge to the optimal value of the infinite dimensional LP as the relaxation degree tends to infinity, under standard assumptions. The SDP relaxations can be readily solved by standard semidefinite programming solvers such as MOSEK or SeDuMi~\cite{sedumi}.

The major advantage of the presented approach is its overall convexity, thereby avoiding the inherent issues associated to non-convex optimization such as local minima. This comes at the expense of having to solve a sequence of convex semidefinite programming problems  of increasing size, governed by the number of unknown functions in the PDE plus the number of spatio-temporal dimensions of the PDE and the number of derivatives appearing nonlinearly. In order for the approach to be applicable to large-scale problems such as those coming from computational fluid dynamics (e.g., the Navier-Stokes equations in three spatial dimensions), structure/sparsity exploitation (e.g. using~\cite{waki2006sums}) or tailored moment-sum-of-squares solvers (e.g.,~\cite{papp2017sum}) may be required. In this paper, we briefly describe several of the most obvious ways for structure exploitation and complexity reduction, although a more detailed analysis of the problem structure is likely to uncover more.

At his point we would like to stress that the goal of this work is not to present a new PDE \emph{solver} but rather a numerical procedure, based on convex optimization, for obtaining 
\emph{bounds} on functionals of the solutions to the PDE and for \emph{control} design. Besides being of independent theoretical interest, such bounds are important in robustness analysis (e.g., to verify that the maximum force on a given construction does not exceed the breakage point of the material used) as well as for validation of numerical solvers. The use of the control design portion of our work is broad, spanning virtually all applications where feedback control of PDEs is required as well as the field of PDE-constrained optimization (e.g.,\cite{hinze2008optimization}), where the ``control'' may not necessarily represent a feedback control in the classical sense but, for example, a parametrization of the shape of a material to be designed.

The distinguishing and important feature is that the proposed approach does not rely on spatio-temporal gridding but rather works with moment sequences of measures supported on the graphs of the solution to the PDE. In fact, the bounds or the polynomial feedback controller are obtained without ever numerically solving the PDE. This is especially useful in the case of complex dynamical behavior (e.g., a high Reynolds number fluid) where fine gridding is necessary to ensure convergence of the numerical scheme for solving the PDE, thereby making the solution very costly. On the other hand, the size of the SDP solved by our approach is not affected by the complexity of the dynamical behavior studied, once the number of moments is fixed. The quality of the bound or controller obtained with a given number of moments may, of course, depend on the complexity of the dynamical behavior. Nevertheless, the bounds obtained are always valid, irrespective of the number of moments considered, up to the numerical precision of the SDP solution.

It remains an open theoretical question under what conditions the infinite dimensional LP is equivalent to the original problem. This question was resolved for the problems of optimal control of ordinary differential equations in~\cite{vinter1993convex} and later for the problems of region of attraction and invariant set computation in~\cite{kordaROA} and \cite{kordaMCI}. The situation appears to be far more complex for nonlinear PDEs; in the uncontrolled case, one pathway to establishing the equivalence may lead through the weak-strong uniqueness results of, e.g., \cite{brenier2011weak}, which suggest expanding the set of constraints of the infinite dimensional LP with additional  inequalities satisfied by any strong solution.

The presented approach can be seen as a generalization of the approach of~\cite{lasserre2008nonlinear} from nonlinear ordinary differential equations to nonlinear partial differential equations.

To the best of the authors' knowledge, this is the first time that nonlinear PDEs are addressed using convex semidefinite programming at this level of generality. Besides the concurrent work \cite{burgers} focusing on hyperbolic conservation laws with the emphasis on convergence aspects, we are aware of only a few other attempts in the technical literature. For example, the early work~\cite{rubio1995global} exploited the linear measure formulation of non-linear (semi-linear elliptic) PDE control problems, although without convergence proofs and using a computationally inefficient linear programming discretization. The moment-SOS hierarchy was used systematically in \cite{mevissen2008solving} for solving approximately non-linear PDEs, formulated after time and domain discretization as large-scale sparse non-convex polynomial optimization problems. Bounds on functionals of solutions were obtained with SOS polynomials for linear elliptic PDEs in~\cite{kawai2015explicit}, for non-linear PDEs arising in fluid dynamics in~\cite{chernyshenko2014polynomial} and for the non-linear Kuramoto-Sivashinsky PDE in~\cite{goluskin2018bounds}. These works, however, provide bounds with no convergence guarantees and do not exploit the ``primal'' formulation of the problem on measures, which we believe to be crucial for convergence analysis. In the context of classical linear matrix inequality (LMI) Lyapunov approaches, there have been recently several works that developed LMI conditions for PDE analysis and controller design, but only in the linear case; see e.g. \cite{fridman2009lmi,barreau2018lyapunov,lamare2016optimisation,valmorbida2016stability,gahlawat2017convex} and references therein. Finally, occupation measures and the moment-SOS hierarchy were used in \cite{magron2017optimal} for Riesz-spectral linear PDE control by state-mode discretization and in \cite{bertsimas2006bounds} for obtaining bounds on functionals of solutions to linear PDEs. Our contribution with respect to these references are as follows:
\begin{itemize}
\item we consider non-linear PDE analysis and control;
\item we do not use time, space or spectral discretization\footnote{The approach used in this paper bears some similarity to the classical Galerkin discretization since the PDE is enforced (in the weak form) on a certain subspace of smooth test functions (in our case on all polynomials up to a given degree). However, contrary to the Galerkin method, the solution to the PDE is \emph{not} sought as a linear combination of these test functions but rather represented by the moments of the associated occupation measure whose positivity is enforced using semidefinite programming.} of the PDEs;
\item we introduce the notion of occupation and boundary measures in the PDE context;
\item we provide rigorous convergence proofs for the moment-SOS hierarchy in the PDE context.
\end{itemize}

The paper is organized as follows. Section~\ref{sec:statement_anal} describes the problem setting for the analysis of PDEs. Section~\ref{sec:occupMeas} introduces the occupation and boundary measures and uses them to derive a linear embedding of the nonlinear PDE, resulting in an infinite-dimensional LP. Section~\ref{sec:SDPuncont} presents the hierarchy of SDP relaxations providing the upper and lower bounds and describes several generalizations of the approach. Section~\ref{sec:statement_cont} describes the problem setup for PDE control. Section~\ref{sec:linRepCont} describes the linear embedding for the controlled case, leading an infinite-dimensional LP. Section~\ref{sec:SDPcont} presents the SDP relaxations for the controlled case and a way to extract a feedback controller from its solutions. Section~\ref{sec:higherOrder} describes a generalization to higher-order PDEs. Section~\ref{sec:complex} discusses the computational complexity of the approach and presents several complexity reduction techniques. Section~\ref{sec:numEx} presents numerical examples and we conclude and given an outlook in Section~\ref{sec:conclusion}.

\section{Problem statement (analysis)}\label{sec:statement_anal}
We consider the system of nonlinear partial differential equations\footnote{The summation indices in~(\ref{eq:pde}) run through $i\le n, j\le n$, $i\le j$ (due to the symmetry of the Hessian matrix); we omit the summation bounds for conciseness.} (PDEs)
\begin{subequations}\label{eq:pdeEntire}
\begin{align}
F(x,y(x),\D y(x)) + \sum_{i,j} B_{i,j}(x,y(x)) \frac{\partial ^2 y}{\partial x_i\partial x_j} &= 0 \;\;\; \mr{in}\;\; \interior\Omega \label{eq:pde} \\ 
G(x,y(x),\D y(x)) &= 0  \;\;\; \mr{on}\;\; \partial\Omega, \label{eq:bnd}
\end{align}
\end{subequations}
where $y:\Rb^n\to \Rb^{n_y}$ is the unknown, possibly vector-valued, function defined on a domain $\Omega\subset \mathbb{R}^n$ and $F:\Rb^n\times \Rb^{n_y}\times \Rb^{n_y\times n}\to \Rb^{n_F}$ and $G:\Rb^n\times\Rb^{n_y}\times \Rb^{n_y\times n}\to \Rb^{n_G}$ are given, possibly vector-valued, functions. The function $F$ is assumed to be a multivariate polynomial in its arguments whereas $G$ is allowed to be piecewise polynomial\footnote{Allowing $G$ to be piecewise polynomial provides a significant modeling freedom for the boundary condition~(\ref{eq:bnd}). For example, setting $G = 0$ on a certain piece of the boundary results in the trivial constraint $0 = 0$, thereby not assigning any boundary condition on this piece (which is commonly encountered in PDEs involving space and time where the initial condition is typically assigned whereas the terminal condition is free or vice versa).}. The symbol $\D y$ denotes the Jacobian matrix of~$y$, i.e., $(\D y)_{ij} = \frac{\partial y_i}{\partial x_j}$, where $\frac{\partial y_i}{\partial x_j}$ denotes the weak derivative of $y_i$ with respect to $x_j$ (in the Sobolev spaces sense). The coefficients $B_{i,j}(x,y(x)):\Rb^n\times \Rb^{n_y} \to\Rb^{n_F\times n_y}$ are fixed matrix functions with each entry being a polynomial in $(x,y)$ and \[\frac{\partial ^2 y}{\partial x_i\partial x_j} = \left[\frac{\partial ^2 y_1}{\partial x_i\partial x_j},\ldots, \frac{\partial ^2 y_{n_y}}{\partial x_i\partial x_j}\right]^\top, \]
where $\cdot^\top$ denotes the transpose. The domain $\Omega$ is assumed to be a compact basic semialgebraic\footnote{A basic semialgebraic set is the intersection of the super or sub-level sets of finitely many multivariate polynomials. This is a broad class of sets covering most of the domains considered in practice such as boxes, spheres, ellipsoids and intersections thereof.} set of the form
\begin{equation}\label{eq:omegaDef}
\Omega = \{x \in \Rb^n \mid g^\Omega_i(x)\ge 0,\; i = 1,\ldots,n^\Omega\},
\end{equation}
where $g_i^\Omega$ are polynomials and $n^\Omega \in \Nb$. 
 The set $\Omega$ is assumed to be equal to the closure of its interior $\interior\Omega$ and its boundary $\partial \Omega$ is assumed to be (locally) Lipschitz (in the sense of~\cite[Definition 12.9]{leoni2017first}).
 
% \new{The set $\Omega$ is assumed to be regular in the following sense.}
%\begin{definition}[Regular domain]
%The set $\Omega$ is called regular if
%\begin{enumerate}
%\item $\Omega$ is equal to the closure of its interior $\interior\Omega$
%\item $\Omega$ has a Lipschitz boundary
%\item $\Omega$ is connected in the
%\end{enumerate}
%\end{definition}

Since $\Omega$ is basis semialgebraic, its boundary is piecewise smooth and hence admits a surface measure $\sigma$ satisfying the integration by parts formula (a special case of the Stokes formula)
\begin{equation}\label{eq:stokes}
\int_{\partial\Omega} h(x)\eta_m(x)\,d\sigma(x) = \int_{\Omega} \frac{\partial h}{\partial x_m}(x)\,dx,\;\; m=1,\ldots,n
\end{equation}
for any $h \in W^{1,\infty}(\interior\Omega)$, where $\eta(x) = (\eta_1(x),\ldots,\eta_n(x))$ is the outward unit surface normal vector to $\partial \Omega$. The boundary $\partial \Omega$ can be decomposed as
\begin{equation}\label{eq:boundPartiotion}
\partial \Omega = \bigcup_{i=1}^{n_{\mr{b}}} \Omega_{\partial,i}
\end{equation}
with each $\Omega_{\partial,i}$ being basic semialgebraic and smooth, i.e.,
\begin{equation}\label{eq:partialOmega_i}
\Omega_{\partial,i} = \{x \in \Rb^n \mid g^{\partial,i}_j(x)\ge 0,\; j = 1,\ldots,n_{\partial,i}\},
\end{equation}
where $g^{\partial,i}_j$ are polynomials and $n_{\partial,i}\in \Nb$. The partition~(\ref{eq:boundPartiotion}) can be taken disjoint up to a set of zero surface measure $\sigma$ (e.g., corner points). The piecewise polynomial function $G$ is assumed to be polynomial on each element $\Omega_{\partial,i}$ of the partition.

\begin{remark}[Weak solutions, regularity and uniqueness]\label{rem:regul}
Throughout this paper we consider solutions $y(\cdot)$ to the PDE (\ref{eq:pdeEntire}) to lie in the Sobolev space $W^{k,\infty}$ with $k =1$ if $B_{i,j} = 0$ and $k = 2$ otherwise. The equations ~(\ref{eq:pde}) and~(\ref{eq:bnd}) are assumed to be satisfied almost everywhere with respect to the Lebesgue measure on $\interior\Omega$ respectively the surface measure $\sigma$ on $\partial\Omega$. In some cases, this notion of solution is too wide to be of practical interest; if this the case, the equations~(\ref{eq:pde}) and (\ref{eq:bnd}) must be supplemented with additional conditions that single out the solutions of interest. These conditions may, for example, be the semiconcavity for Hamilton-Jacobi type PDEs or the entropy inequalities for conservation laws (see, e.g., Sections 3.3.3 and 3.4.3 of~\cite{evansPDEbook}). We note that the $W^{k,\infty}$ regularity assumption is made in order to avoid technicalities; all results presented in this paper hold also with $y(\cdot) \in W^{k,p}$ with sufficiently large $p$ depending on the degree of the polynomials $F$ and $G$.
\end{remark}

This is a very general setting encompassing a large number of equations of interest, including the compressible as well as incompressible Navier-Stokes equations.

\begin{remark}[Higher-order PDEs]
Higher order nonlinear PDEs can be handled using the proposed approach by introducing additional variables. This is described in Section~\ref{sec:higherOrder}.
\end{remark}

 For the time being, we consider the problem of analysis of such a PDE (control is treated in Section~\ref{sec:statement_cont}); in particular we are interested in establishing bounds on a functional of the form
\begin{equation}\label{eq:obj}
J\big(y(\cdot)\big) := \int_{\Omega}L\left(x,y(x),\D y(x)\right)\,dx + \int_{\partial\Omega}L_\partial\left(x,y(x),\D y(x)\right)\,d\sigma(x),
\end{equation}
where $L:\Rb^n\times \Rb^{n_y}\times \Rb^{n_y\times n}\to \Rb$ is a given multivariate polynomial and $L_\partial:\Rb^n\times \Rb^{n_y}\times \Rb^{n_y\times n}\to \Rb$ is a piecewise multivariate polynomial such that each restriction of $L_{\partial}$ to $\Omega_{\partial,i}$ is a multivariate polynomial. This is a fairly general form of a functional; for example, if $y$ represents the velocity field of a fluid, then~(\ref{eq:obj}) with $L(x,y,z) = \sum_i y_i^2$ and $L_\partial = 0$ is proportional to the average kinetic energy of the fluid.

Now we are ready to state the first goal of this paper:

\textbf{Problem 1}\quad {\it Find upper and lower bounds on~(\ref{eq:obj}) evaluated along the solutions of~(\ref{eq:pdeEntire}).}

Formally speaking, this task can be expressed as finding upper and lower bounds on the optimal value of the optimization problem
\begin{equation}\label{opt:ocp_uncont}
\begin{array}{ll}
\underset{y(\cdot)}{\mbox{inf}\,/\,\mbox{sup}} & J\big(y(\cdot)\big)  \\
\mbox{subject to} &  (\ref{eq:pde}),\; (\ref{eq:bnd}).
\end{array}
\end{equation}
If minimization is considered in~(\ref{opt:ocp_uncont}), then lower bounds on the optimal value of this optimization problem yield lower bounds on~(\ref{eq:obj}) evaluated along the solutions of~(\ref{eq:pde}) and (\ref{eq:bnd}). Similarly, if maximization is considered in~(\ref{opt:ocp_uncont}), then upper bounds on the optimal value of~(\ref{opt:ocp_uncont}) yield upper bounds on~(\ref{eq:obj}).

We note that, since the PDE (\ref{eq:pde}) and (\ref{eq:bnd}) is nonlinear, the problem~(\ref{opt:ocp_uncont}) is an infinite-dimensional \emph{nonconvex} optimization problem. One of the main contributions of this work is a method of finding  bounds on the optimal value of this  infinite-dimensional nonconvex problem in terms of the solution to a finite-dimensional \emph{convex} optimization problem.

\begin{remark}[Uniqueness]
We make no a priori assumptions on the uniqueness of the solutions to~(\ref{eq:pdeEntire}). If~(\ref{eq:pdeEntire}) has a unique solution, then the feasible set of the optimization problem~(\ref{opt:ocp_uncont}) is a singleton in which case the infimum and supremum of~(\ref{opt:ocp_uncont}) coincide; otherwise, if the solution to~(\ref{eq:pdeEntire}) is not unique, the infimum and supremum may differ. In either case, the approach presented provides upper bounds on the supremum and lower bounds on the infimum. See also Remark~\ref{rem:regul}.
\end{remark}

\begin{remark}[Non-polynomial Dirichlet boundary conditions]
The presented approach can be extended to non-polynomial Dirichlet boundary conditions of the form $y(x) = h_i(x)$, $x\in\partial\Omega_i$, $i \in \mathcal{I}$, $\mathcal{I}\subset \{1,\ldots,n_{\mr{b}}\}$, where $h_i$ is an arbitrary integrable function such that the integrals $\int_{\partial\Omega_i} x^\alpha h_i^\beta(x)\,d\sigma(x)$ are computable (either analytically or numerically). This is treated in Section~\ref{sec:nonPolBnd}. Note that this extension can also be used with polynomial $h_i$, leading to a simpler representation than when using the general form~(\ref{eq:bnd}).
\end{remark}

\begin{remark}[Periodic boundary conditions]
The presented approach can also be extended handle periodic boundary conditions. This  is treated in Section~\ref{sec:periodicBnd}.
\end{remark}

\section{Occupation measures for nonlinear PDEs}\label{sec:occupMeas}
The goal of this section is to transform the nonconvex optimization problem~(\ref{opt:ocp_uncont}) to an infinite-dimensional \emph{linear} (and hence convex) programming problem. The key ingredient for doing so are the so-called occupation and boundary measures associated to the solutions of the PDE~(\ref{eq:pdeEntire}). For notational simplicity we assume that the solutions to~(\ref{eq:pdeEntire}) satisfy \[
y(x) \in \Yf\subset \Rb^{n_y},\quad \D y(x) \in \Zf\subset \Rb^{n_y\times n}.
\]
 Given any solution to (\ref{eq:pdeEntire}), the \emph{occupation measure} associated to $y(\cdot)$ is defined by
\begin{equation}\label{eq:occupMeas}
\mu(A\times B\times C) = \int_{\Omega} \Ind_{A\times B\times C}\left(x,y(x),\D y(x)\right)\,dx
\end{equation}
for all Borel sets $A\subset \Omega$, $B\subset \Yf$, $C\subset \Zf$. Similarly, the \emph{boundary measure} is defined by
\begin{equation}\label{eq:boundaryMeas}
\mu_\partial(A\times B\times C) = \int_{\partial\Omega} \Ind_{A\times B\times C}\left(x,y(x),\D y(x)\right)\,d\sigma(x),
\end{equation}
for all Borel sets $A\subset \partial\Omega$, $B\subset \Yf$, $C\subset \Zf$. The occupation measure captures the behavior of the solution $y(\cdot)$ and its first derivative in $\Omega$ whereas $\mu_\partial$ captures the behavior on the boundary. The measures $\mu$ and $\mu_\partial$ are nonnegative Borel measures on $\Omega\times \Yf\times\Zf$ respectively $\partial\Omega\times \Yf\times\Zf$. It follows immediately from the definition of $\mu$ and $\mu_\partial$ that for any bounded Borel measurable function $h:\Omega\times \Yf\times \Zf \to \Rb$ we have
\begin{subequations}
\begin{equation}\label{eq:occupMeasFun}
\int_{\Omega}  h\left(x,y(x),\D y(x)\right)\,dx = \int_{\Omega\times \Yf\times \Zf}  h\left(x,y,z\right)\,d\mu(x,y,z)
\end{equation}
\text{and}
\begin{equation}\label{eq:boundaryMeasFun}
\int_{\partial\Omega}  h\left(x,y(x),\D y(x)\right)\,d\sigma(x) = \int_{\partial\Omega\times \Yf\times \Zf}  h\left(x,y,z\right)\,d\mu_\partial(x,y,z).
\end{equation}
\end{subequations}
We note that on the right-hand side of~(\ref{eq:occupMeasFun}) and (\ref{eq:boundaryMeasFun}), $y$ and $z$ are not functions of $x$ anymore but rather integrated variables. Therefore, the measures $\mu$ and $\mu_\partial$ allow us to evaluate integral functionals of the solutions to the PDE~(\ref{eq:pdeEntire}) using integration with respect to  ordinary nonnegative Borel measures defined on the Euclidean space subsets $\Omega\times \Yf\times \Zf$ respectively $\partial\Omega\times \Yf\times \Zf$. This simple observation will turn out to be crucial in constructing computable bounds on the optimal value of~(\ref{opt:ocp_uncont}).

\subsection{Linear representation}\label{sec:linRep}
Now we will use the occupation and boundary measures to derive a \emph{linear} equation in the space of Borel measures which is satisfied by any pair of occupation and boundary measures arising from a solution to~(\ref{eq:pdeEntire}).

\subsubsection{Scalar case}
To begin, we will demonstrate the derivation in the scalar case ($n=n_y = 1$) and with $B_{i,j} = 0$. To this end, let $y(\cdot)$ be a solution to~(\ref{eq:pdeEntire}) and denote $z(x) := \frac{\partial y}{\partial x}(x)$. Given any test function $\phi\in C^\infty( \Omega \times \Yf )$ we have using~(\ref{eq:stokes})
\[
 \int_{\partial \Omega} \phi(x,y(x))\eta(x) d\sigma(x)  = \int_{\Omega}\frac{d}{dx}\phi(x,y(x))\, dx = \int_{\Omega} \frac{\partial \phi}{\partial x} + \frac{\partial \phi}{\partial y} \frac{\partial y}{\partial x}\, dx = \int_{\Omega} \frac{\partial \phi}{\partial x} + \frac{\partial \phi}{\partial y} z(x)\, dx,
\]
where $\eta(x)$ is the outward unit surface normal vector to $\partial \Omega$. Using~(\ref{eq:occupMeasFun}) and (\ref{eq:boundaryMeasFun}), we get
\begin{equation}\label{eq:aux1}
  \int_{\partial \Omega\times \Yf\times \Zf} \phi(x,y)\eta(x)\,d\mu_\partial(x,y,z) - \int_{\Omega\times \Yf\times \Zf} \frac{\partial \phi}{\partial x} + \frac{\partial \phi}{\partial y} z\,d\mu(x,y,z) = 0.
\end{equation}
Similarly, we also have
\begin{align*}
&\int_{\Omega}\phi(x,y(x)) F(x,y(x),z(x))\,dx = 0, \\
& \int_{\partial\Omega}\phi(x,y(x)) G(x,y(x),z(x))\,d\sigma(x) = 0.
\end{align*}
 Using~(\ref{eq:occupMeasFun}) and (\ref{eq:boundaryMeasFun})  these equations translate to
\begin{align}
&\int_{\Omega\times \Xf \times \Zf} \phi(x,y)F(x,y,z)\,d\mu(x,y,z) = 0 \label{eq:aux2} \\
&\int_{\partial\Omega\times \Xf \times \Zf} \phi(x,y) G(x,y,z)\,d\mu_\partial(x,y,z) = 0. \label{eq:aux3}
\end{align}
Notice that in~(\ref{eq:aux1}), (\ref{eq:aux2}), (\ref{eq:aux3}), $y$ and $z$ are no longer functions of $x$ but rather integrated variables. In addition, crucially, we observe that these equations depend \emph{linearly} on $(\mu,\mu_\partial)$. Therefore, we have shown that any solution to the nonlinear PDE~(\ref{eq:pdeEntire})  satisfies the system of \emph{linear} equations~(\ref{eq:aux1}), (\ref{eq:aux2}), (\ref{eq:aux3}) in variables $(\mu,\mu_\partial)$ for all test functions $\phi\in C^\infty(\Omega\times \Yf)$.

\subsubsection{General case}
We have the following theorem:
\begin{theorem}\label{thm:linearRepGen}
Let $y(\cdot) \in W^{k,\infty}(\interior\Omega;{\bf Y})\cap C(\Omega;{\bf Y}) $ ($k=1$ if $B_{i,j} = 0$, $k=2$ otherwise) satisfy~(\ref{eq:pde}) and (\ref{eq:bnd}) almost everywhere with respect to the Lebesgue measure on $\interior\Omega$ respectively the surface measure $\sigma$ on $\partial\Omega$. If $\mu$ and $\mu_\partial$ are defined by~(\ref{eq:occupMeas}) and (\ref{eq:boundaryMeas}), then for all $\phi \in C^\infty(\Omega\times \Yf)$
\begin{subequations}\label{eq:thmEqEntire}
\begin{align}\label{eq:thmEq1}
\int_{\Omega\times \Yf\times \Zf} \left[ \frac{\partial \phi}{\partial x_m} + \sum_{k=1}^{n_y} \frac{\partial \phi}{\partial y_k}z_{k,m}\right]d\mu(x,y,z) -  \int_{\partial\Omega\times \Yf\times \Zf} \phi(x,y) \eta_m(x)\,d\mu_\partial(x,y,z) = 0
\end{align}
for all $m\in \{1,\ldots,n\}$ and
\begin{align}\label{eq:thmEq2}
\int_{\Omega\times \Yf\times \Zf} \phi(x,y)F(x,y,z)\,d\mu(x,y,z) &\nonumber\\&\hspace{-40mm} - \sum_{i,j}\int_{\Omega\times \Yf\times \Zf}  \phi(x,y)\left[\frac{\partial B_{i,j}}{\partial x_j}+ \sum_{k=1}^{n_y}\frac{\partial B_{i,j}}{\partial y_k} z_{k,j} \right] z_{\bullet,i}\,d\mu(x,y,z)\nonumber \\ & \hspace{-40mm}  - \sum_{i,j}\int_{\Omega\times\Xf\times\Yf}   \left[\frac{\partial \phi}{\partial x_j} +\sum_{k=1}^{n_y} \frac{\partial \phi}{\partial y_k}z_{k,j}  \right]B_{i,j}(x,y) z_{\bullet,i}\,d\mu(x,y,z)\nonumber\\& \hspace{-40mm} + \sum_{i,j}\int_{\partial\Omega\times\Yf\times\Zf} B_{i,j}(x,y)\phi(x,y)z_{\bullet,i}\eta_j(x)\,d\mu_\partial(x,y,z) = 0
\end{align}
%\begin{align}\label{eq:thmEq2}
%\int\limits_{\Omega\times \Yf\times \Zf}\!\!\left\{ \left[ \frac{\partial \phi}{\partial x_m} + \sum_{k=1}^{n_y} \frac{\partial \phi}{\partial y_k}z_{k,m}\right]y_i + \phi(x,y) z_{l,m} \right\}d\mu(x,y,z) - \hspace{-6mm} \int\limits_{\partial\Omega\times \Yf\times \Zf} \hspace{-5mm}\phi(x,y) y_l \eta_m(x)\,d\mu_\partial(x,y,z) = 0
%\end{align}
and
\begin{equation}\label{eq:thmEq3}
\int_{\partial\Omega\times \Xf \times \Zf} \phi(x,y) G(x,y,z)\,d\mu_\partial(x,y,z) = 0,
\end{equation}
where $z_{\bullet,i}$ denotes the $i^{\mr{th}}$ column of the matrix variable $z \in \Rb^{n_y\times n}$.
\end{subequations}
\end{theorem}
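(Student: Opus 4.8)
The statement collects three linear identities, and the plan is to establish, for a fixed solution $y(\cdot)$ and a fixed test function $\phi$, the corresponding integral identities as equalities between ordinary integrals over $\interior\Omega$ and $\partial\Omega$ of functions of $(x,y(x),\D y(x))$, passing to the measure formulation only at the very end by substituting the change-of-variables relations (\ref{eq:occupMeasFun}) and (\ref{eq:boundaryMeasFun}). Among the three, (\ref{eq:thmEq1}) and (\ref{eq:thmEq3}) are the direct vector-valued analogues of the scalar computation already carried out, whereas (\ref{eq:thmEq2}) requires genuine work because of the second-order term $\sum_{i,j}B_{i,j}\,\partial^2 y/\partial x_i\partial x_j$, whose derivatives must be lowered to first order (the only order the measures $\mu,\mu_\partial$ see) by integration by parts.

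For (\ref{eq:thmEq1}) I would fix $m$ and apply the chain rule to $h(x):=\phi(x,y(x))$, giving $\partial h/\partial x_m=\partial\phi/\partial x_m+\sum_{k}(\partial\phi/\partial y_k)z_{k,m}$ almost everywhere, where $z_{k,m}=\partial y_k/\partial x_m$. Feeding $h$ into the Stokes identity (\ref{eq:stokes}) turns the boundary integral of $\phi\,\eta_m$ into the domain integral of this chain-rule expression, and substituting (\ref{eq:occupMeasFun})--(\ref{eq:boundaryMeasFun}) yields (\ref{eq:thmEq1}). Equation (\ref{eq:thmEq3}) is immediate: since $G(x,y(x),\D y(x))=0$ holds $\sigma$-almost everywhere on $\partial\Omega$ by (\ref{eq:bnd}), multiplying by $\phi(x,y(x))$ and integrating over $\partial\Omega$ gives zero, and (\ref{eq:boundaryMeasFun}) rewrites this as an integral against $\mu_\partial$.

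The core is (\ref{eq:thmEq2}). Multiplying (\ref{eq:pde}) by $\phi(x,y(x))$ and integrating over $\Omega$ gives $\int_\Omega \phi F\,dx+\sum_{i,j}\int_\Omega \phi\,B_{i,j}\,\partial^2 y/\partial x_i\partial x_j\,dx=0$, and the first term already matches the first term of (\ref{eq:thmEq2}) after substitution. For each pair $(i,j)$ I would write $\partial^2 y/\partial x_i\partial x_j=\partial z_{\bullet,i}/\partial x_j$ with $z_{\bullet,i}=\partial y/\partial x_i$, set $\Psi:=\phi(x,y(x))B_{i,j}(x,y(x))$, and use the product rule $\Psi\,\partial z_{\bullet,i}/\partial x_j=\partial(\Psi z_{\bullet,i})/\partial x_j-(\partial\Psi/\partial x_j)z_{\bullet,i}$. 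Applying Stokes (\ref{eq:stokes}) to the total-derivative term produces the boundary integral $\int_{\partial\Omega}\phi\,B_{i,j}z_{\bullet,i}\eta_j\,d\sigma$, i.e.\ the last term of (\ref{eq:thmEq2}). It then remains to expand $\partial\Psi/\partial x_j$ by the product and chain rules into $[\partial\phi/\partial x_j+\sum_k(\partial\phi/\partial y_k)z_{k,j}]B_{i,j}+\phi[\partial B_{i,j}/\partial x_j+\sum_k(\partial B_{i,j}/\partial y_k)z_{k,j}]$; multiplying by $z_{\bullet,i}$, summing over $i,j$, and substituting (\ref{eq:occupMeasFun})--(\ref{eq:boundaryMeasFun}) reproduces exactly the remaining two domain terms of (\ref{eq:thmEq2}) with the correct signs.

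The step I expect to be delicate is the justification of these manipulations at the stated Sobolev regularity, rather than the purely mechanical bookkeeping of the chain rule. Both Stokes applications require the differentiated quantity to lie in $W^{1,\infty}(\interior\Omega)$: for (\ref{eq:thmEq1}) one needs $\phi(\cdot,y(\cdot))\in W^{1,\infty}$, which follows from $\phi\in C^\infty$, $y\in W^{1,\infty}$ and the chain rule for compositions of smooth and Sobolev functions; for (\ref{eq:thmEq2}) one needs $\Psi z_{\bullet,i}\in W^{1,\infty}$, and this is precisely where the assumption $y\in W^{2,\infty}$ (forced by $B_{i,j}\neq0$) enters, since it guarantees $z_{\bullet,i}=\partial y/\partial x_i\in W^{1,\infty}$ while $\phi(\cdot,y(\cdot))$ and $B_{i,j}(\cdot,y(\cdot))$ are $W^{1,\infty}$ as well, so their product is $W^{1,\infty}$ with the product rule valid almost everywhere. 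Verifying that the composition chain rule and the product rule for weak derivatives hold in this setting, so that the pointwise identities used above are legitimate almost everywhere, is the only real obstacle; once it is in place, the three measure identities follow by the linear substitutions (\ref{eq:occupMeasFun})--(\ref{eq:boundaryMeasFun}).
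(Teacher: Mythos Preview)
Your proposal is correct and follows essentially the same route as the paper: for each identity you apply the chain rule to $\phi(x,y(x))$ (and, for (\ref{eq:thmEq2}), to $\phi(x,y(x))B_{i,j}(x,y(x))$), invoke the integration-by-parts formula (\ref{eq:stokes}), and then pass to the measure formulation via (\ref{eq:occupMeasFun})--(\ref{eq:boundaryMeasFun}). The paper's proof differs only in being slightly more explicit about why $\phi(\cdot,y(\cdot))$ is Lipschitz on the \emph{closed} set $\Omega$ --- it uses the continuity hypothesis $y\in C(\Omega;\Yf)$ together with the Lipschitz boundary and an extension argument --- whereas you appeal directly to composition rules; this is a minor point of rigor rather than a difference in strategy.
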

\begin{proof}
To prove (\ref{eq:thmEq1}), fix $m \in \{1,\ldots,n\}$ and $\phi \in C^\infty(\Omega\times \Yf)$. Since $y$ is continuous and $y \in W^{k,\infty}$ with $k\in\{1,2\}$ and $\partial\Omega$ is Lipschitz, $y$ is in fact Lipschitz on~$\Omega$ (this follows from~\cite[Theorem~4, p. 294]{evansPDEbook} and the extension theorem \cite[Theorem 12.15]{leoni2017first} used in the last step of the proof). Consequently, $\phi(x,y(x))$ is also Lipschitz on $\Omega$ and therefore the integration by parts formula~(\ref{eq:stokes}) can be applied to obtain
\[
\int_{\partial \Omega} \phi(x,y(x)) \eta_m(x)\,d\sigma(x) = \int_{\Omega}\left [ \frac{\partial \phi}{\partial x_m} + \sum_{k=1}^{n_y} \frac{\partial \phi}{\partial y_k}\frac{\partial y_k}{\partial x_m}\right] dx.
\]
Using~(\ref{eq:occupMeasFun}) and (\ref{eq:boundaryMeasFun}), this is equivalent to
\[
\int_{\partial \Omega\times \Xf\times\Zf} \phi(x,y) \eta_m(x)\,d\mu_\partial(x,y,z) = \int_{\Omega\times \Xf\times\Zf}\left [ \frac{\partial \phi}{\partial x_m} + \sum_{k=1}^{n_y} \frac{\partial \phi}{\partial y_k}z_{k,m}\right] d\mu(x,y,z),
\]
which is~(\ref{eq:thmEq1}).

To prove (\ref{eq:thmEq2}), fix  $\phi \in C^\infty(\Omega\times \Yf)$ and assume $y\in W^{2,\infty}(\interior\Omega ; \Yf)$ satisfies (\ref{eq:pde}) for almost all $x \in \Omega$. Then the function $\phi(\cdot,y(\cdot))$ is in $W^{2,\infty}(\Omega ;\Rb)$. In addition, since $F$ and $B_{i,j}$ are polynomials in all its arguments, each component of the vector-valued function $F(\cdot,y(\cdot),\Dc y(\cdot))$ is in $L_\infty(\Omega;\Rb)$ and each component of the matrix-valued function $B_{i,j}(\cdot,y(\cdot)) $ is in $W^{2,\infty}(\Omega ; \Rb)$. Therefore, each term on the left-hand side of (\ref{eq:pde}) is in $L_\infty(\Omega)$ and since~(\ref{eq:pde}) holds for almost all $x\in\interior\Omega$ and $\Omega$ is compact, we have
\[
\int_{\Omega} \phi(x,y(x))  F\left(x,y(x),\D y(x)\right)\,dx + \sum_{i,j} \int_{\Omega}\phi(x,y(x)) B_{i,j}(x,y(x)) \frac{\partial ^2 y}{\partial x_i\partial x_j} \, dx = 0.
\]
The equation~(\ref{eq:thmEq2}) follows by using~(\ref{eq:occupMeasFun}) on the first term of the above expression and (\ref{eq:occupMeasFun}) and (\ref{eq:boundaryMeasFun}) on the second term after using the integration by parts formula~(\ref{eq:stokes}) and the chain rule, which are justified under the regularity conditions stated above since both $\phi(\cdot,y(\cdot))B_{i,j}(\cdot,y(\cdot))$ and $\frac{\partial y}{\partial x_j}$ are Lipschitz.

Finally, equation~(\ref{eq:thmEq3}) follows immediately by integrating~(\ref{eq:bnd}) against a test function $\phi\in C^\infty(\Omega\times \Yf)$ and using~(\ref{eq:boundaryMeasFun}).
\end{proof}

We note that the assumption $y(\cdot) \in W^{k,\infty}$ is made in order to avoid technicalities and can be relaxed; see Remark~\ref{rem:regul}.

%\textbf{(Milan) I believe the proof would go through with $y(\cdot)$ only in $W^{2,p}\cap L_\infty$ or $W^{1,p}\cap L_\infty$ with $p = \max\{\mr{deg}\, F, \mr{deg}\, G\}$ (this implies that $F(\cdot,y(\cdot),\Dc y(\cdot)) \in L_1(\Omega,dx)$, $G(\cdot,y(\cdot),\Dc y(\cdot)) \in L_1(\partial\Omega, d\sigma)$ and $\phi(\cdot,y(\cdot)) \in L_\infty(\Omega,dx)$, which is needed for the argument to go through). We would have to use the Sobolev trace of $y$ to define the boundary values since $y(\cdot)$ may be discontinuous (if $p \le n$). This would have to be done already in the definition of the boundary measure $\mu_\partial$ in~(\ref{eq:boundaryMeas}). The Stokes formula still holds in this case (e.g., Theorem 4.6 in the Evans $\&$ Gariepy book). Do you believe this is worth doing in full or should we rather make a remark? Certainly, $W^{1,p}$ allows for discontinuous $y(\cdot)$ (e.g., with shocks) which is interesting for analysts. For practitioners $W^{1,\infty} $ is probably good enough.}

More importantly, we note that as in the scalar case, the equations~(\ref{eq:thmEq1}), (\ref{eq:thmEq2}), (\ref{eq:thmEq3}) depend \emph{linearly} on $(\mu,\mu_\partial)$. We also note that $x,y,z$ are all integrated  variables; in particular $y$ and $z$ are \emph{not} a function of $x$ in these equations. Notice also that if $F$, $G$ and $B_{i,j}$ are polynomial, all integrands in~(\ref{eq:thmEqEntire}) are polynomials in $(x,y,z)$ provided that the test function $\phi(x,y)$ is polynomial and provided that the normal vector $\eta(x)$ depends polynomially\footnote{The assumption of $\eta$ being polynomial is made just for the simplicity of exposition. In section~\ref{sec:normalVec} we describe how to relax this assumption.} on~$x$. This will be crucial in developing a tractable finite-dimensional approximation of these infinite-dimensional equations.

Note also that equations (\ref{eq:thmEq1})-(\ref{eq:thmEq3}) are in general \emph{relaxations} in the sense that the set of all measures $(\mu,\mu_\partial)$ satisfying these equations may be larger than the set of all occupation and boundary measure pairs corresponding to the solutions to the PDE~(\ref{eq:pdeEntire}). Note in particular that the set of all measures satisfying (\ref{eq:thmEq1})-(\ref{eq:thmEq3}) is convex, whereas the set of all occupation and boundary measure pairs satisfying~(\ref{eq:pdeEntire}) may be nonconvex (unless the solution is unique, which we do not assume). Therefore, in the very least, the set of all solutions to  (\ref{eq:thmEq1})-(\ref{eq:thmEq3}) contains the entire closed \emph{convex hull} of the set of all boundary and occupation measure pairs. It remains an open question whether the solution set of~(\ref{eq:thmEq1})-(\ref{eq:thmEq3}) in fact coincides with this convex hull. This question is not only of independent theoretical interest but also a prerequisite for tightness of the bounds developed in the following sections; see Remark~\ref{rem:tightBounds} for a further discussion.

\subsection{Infinite-dimensional linear program}
Now we are ready to write down an infinite-dimensional LP whose optimal value will provide a lower or upper bound on the optimal value of~(\ref{opt:ocp_uncont}). We denote by $\Mc(A)_+$ the set of all nonnegative Borel measures with supports included in the set $A$. We shall decompose the boundary measure $\mu_{\partial}$ according to the partition of the boundary of~$\Omega$~(\ref{eq:boundPartiotion}), i.e., we write
 \begin{equation}\label{eq:boundMeasDecomp}
\mu_{\partial} = \sum_{i=1}^{n_\mr{b}}\mu_{\partial_i},\quad \mu_{\partial_i}\in \Mc(\partial\Omega_i\times \Yf\times \Zf)_+. 
 \end{equation}
 We note that the decomposition~(\ref{eq:boundMeasDecomp}) is unique since $\mu_{\partial_i}(\partial \Omega_j\times \Yf\times \Zf) = 0$ whenever $i\ne j$; this follows from the fact that, by~(\ref{eq:boundaryMeas}), the $x$-marginal of $\mu_\partial$ is equal to the surface measure $\sigma$ and $\sigma(\partial\Omega_i\cap \partial\Omega_j) = 0$ whenever $i\ne j$.

With this notation, the infinite-dimensional LP reads
\begin{equation}\label{opt:LPinf}
\begin{array}{ll}
\underset{(\mu,\mu_{\partial_1},\ldots,\mu_{\partial_{n_{\mr{b}}}})}{\mbox{inf}\,/\,\mbox{sup}} & \int_{\Omega\times \Yf\times \Zf}L(x,y,z)\,d\mu + \sum_{i=1}^{n_{\mr{b}}}\int_{\partial\Omega\times \Yf\times \Zf} L_\partial(x,y,z)\,d\mu_{\partial_i}  \\ \vspace{1.1mm}
\mbox{subject to} &  (\ref{eq:thmEq1}),\; (\ref{eq:thmEq2}),\; (\ref{eq:thmEq3}) \;\;[\text{with}\;\, \mu_\partial = \sum_{i=1}^{n_{\mr{b}}}\mu_{\partial_i}] \quad \forall \, \phi \in C^\infty(\Omega\times\Yf)\\ \vspace{1.6mm}
&\int_{\partial\Omega_i\times \Xf\times \Zf} \psi(x)\,d\mu_{\partial_i}(x,y,z) = \int_{\partial\Omega_i} \psi(x)\,\sigma(x)\quad \forall \psi\in C^\infty(\Omega),\;i\in\{1,\ldots,n_{\mr{b}}\}\\ 
& (\mu,\mu_{\partial_1},\ldots,\mu_{\partial_{n_{\mr{b}}}}) \in \Kc,
\end{array}
\end{equation}
where 
 \[
\Kc =  \Mc(\Omega\times \Yf\times \Zf)_+ \times \Mc(\partial\Omega_1\times \Yf\times \Zf)_+\times\ldots\times \Mc(\partial\Omega_{n_{\mr{b}}}\times \Yf\times \Zf)_+.
\]
It is an immediate observation that $\Mc(\cdot)_+$ is a \emph{convex cone},  and therefore $\Kc$ is a also a convex cone. As a result, since all equality constraints of~(\ref{opt:LPinf}) are affine in $(\mu,\mu_{\partial_1},\ldots,\mu_{\partial_{n_{\mr{b}}}})$, the optimization problem~(\ref{opt:LPinf}) is indeed an infinite dimensional LP. The last equality constraint of~(\ref{opt:LPinf}) is a normalization constraint prescribing that the $x$-marginal of $\mu_{\partial_i}$ is equal to the surface measure restricted to $\mu_{\partial_i}$, which trivially holds for any boundary measure defined by~(\ref{eq:boundaryMeas}). Using the integration by parts formula~(\ref{eq:stokes}) and~(\ref{eq:thmEq1}) with $\phi(x,y) = x^\alpha $ for all $\alpha \in \mathbb{N}^n$, this also implies that the $x$-marginal of $\mu$ is equal to the Lebesgue measure on $\Omega$, which holds for any occupation measure defined by~(\ref{eq:occupMeas}).

%\new{A natural question arises about whether the set of feasible solutions to the LP~(\ref{opt:LPinf}) consists only of occupation and boundary measures of the form~(\ref{eq:occupMeas}) and (\ref{eq:boundaryMeas}) for some $y(\cdot)$ solving~(\ref{eq:pdeEntire}) or whether the set of feasible solutions is larger.
%}

The following Lemma states that whenever the a pair of measures $(\mu,\mu_\partial)$ feasible in~(\ref{opt:LPinf}) is supported on a graph of a sufficiently regular function $x\mapsto (y(x),z(x))$, then necessarily $\frac{\partial y_k}{\partial x_m} = z_{k,m}(x)$.
\begin{lemma}
Let $(\mu,\mu_\partial)$ be a pair of measures feasible in~(\ref{opt:LPinf}) and assume that $\mu$ and $\mu_{\partial}$ are supported on a graph $\{ (x,y(x),z(x)) \mid x \in\Omega\} \subset \Omega\times \Yf\times \Zf$ for some mappings $y(\cdot) \in W^{1,\infty}$ and $z(\cdot)\in L_\infty$. Then for all $m\in \{1,\ldots, n\}$ and all $k
\in \{1,\ldots, n_y\}$
\begin{equation}\label{eq:lemEq}
\frac{\partial y_k}{\partial x_m}(x) = z_{k,m}(x)
\end{equation}
holds almost everywhere in $\Omega$. In particular, if $y(\cdot) \in C^1(\Omega)$ and $z(\cdot) \in C(\Omega)$, then~(\ref{eq:lemEq}) holds for all $x\in\Omega$.
\end{lemma}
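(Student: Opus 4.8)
The plan is to combine the structural constraint~(\ref{eq:thmEq1}) with the marginal information that feasibility already supplies. As noted in the paragraph preceding the lemma, any $(\mu,\mu_\partial)$ feasible in~(\ref{opt:LPinf}) has $x$-marginal of $\mu$ equal to the Lebesgue measure on $\Omega$ and $x$-marginal of $\mu_\partial$ equal to the surface measure $\sigma$ on $\partial\Omega$. First I would upgrade the hypothesis that $\mu$ and $\mu_\partial$ are concentrated on the graph of $x\mapsto(y(x),z(x))$ to a pushforward representation: disintegrating $\mu=\int_\Omega \mu_x\,dx$ with each $\mu_x$ a probability measure on $\Yf\times\Zf$, the graph-support hypothesis forces $\mu_x=\delta_{(y(x),z(x))}$ for a.e.\ $x$, so that for every bounded Borel $h$ one has $\int h\,d\mu=\int_\Omega h(x,y(x),z(x))\,dx$, and likewise $\int h\,d\mu_\partial=\int_{\partial\Omega}h(x,y(x),z(x))\,d\sigma(x)$.

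Substituting these identities into~(\ref{eq:thmEq1}) for an arbitrary $\phi\in C^\infty(\Omega\times\Yf)$ and a fixed $m$ turns the abstract constraint into
\[
\int_\Omega\Big[\frac{\partial\phi}{\partial x_m}(x,y(x))+\sum_{k=1}^{n_y}\frac{\partial\phi}{\partial y_k}(x,y(x))\,z_{k,m}(x)\Big]dx=\int_{\partial\Omega}\phi(x,y(x))\eta_m(x)\,d\sigma(x).
\]
On the other hand, since $y\in W^{1,\infty}$ and $\partial\Omega$ is Lipschitz, $y$ and hence $h(x)=\phi(x,y(x))$ are Lipschitz on $\Omega$, exactly as argued in the proof of Theorem~\ref{thm:linearRepGen}, so the integration-by-parts formula~(\ref{eq:stokes}) applies to $h$ and the chain rule gives
\[
\int_{\partial\Omega}\phi(x,y(x))\eta_m(x)\,d\sigma(x)=\int_\Omega\Big[\frac{\partial\phi}{\partial x_m}(x,y(x))+\sum_{k=1}^{n_y}\frac{\partial\phi}{\partial y_k}(x,y(x))\,\frac{\partial y_k}{\partial x_m}(x)\Big]dx.
\]
Subtracting the two displays cancels the $\partial\phi/\partial x_m$ and boundary contributions, leaving the orthogonality relation
\[
\int_\Omega\sum_{k=1}^{n_y}\frac{\partial\phi}{\partial y_k}(x,y(x))\Big[z_{k,m}(x)-\frac{\partial y_k}{\partial x_m}(x)\Big]dx=0
\]
for every $\phi\in C^\infty(\Omega\times\Yf)$ and every $m$.

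To conclude, I would localize by a judicious choice of test functions. Fixing $k$ and taking $\phi(x,y)=\psi(x)\,y_k$ with $\psi\in C^\infty_c(\interior\Omega)$ arbitrary yields $\partial\phi/\partial y_j=\psi(x)\delta_{jk}$, so the sum collapses to a single term and the relation becomes $\int_\Omega\psi(x)\,[z_{k,m}(x)-\partial y_k/\partial x_m(x)]\,dx=0$ for all such $\psi$. Since $z_{k,m}-\partial y_k/\partial x_m\in L_\infty(\Omega)\subset L^1(\Omega)$, the fundamental lemma of the calculus of variations (du~Bois--Reymond) gives $z_{k,m}=\partial y_k/\partial x_m$ almost everywhere in $\interior\Omega$, hence a.e.\ in $\Omega$, which is~(\ref{eq:lemEq}). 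The final ``in particular'' assertion is then immediate, as two continuous functions coinciding almost everywhere coincide everywhere.

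The step I expect to demand the most care is the passage from ``supported on a graph'' to the pushforward representation in the first paragraph: it relies on the disintegration theorem and on measurability of the representatives, so one must fix the continuous representative of $y$ and a Borel representative of $z$ to ensure that the graph is a Borel set carrying full $\mu$- and $\mu_\partial$-mass. Once that representation is secured, the remainder is a routine pairing of Stokes's theorem~(\ref{eq:stokes}) with the fundamental lemma, and the cancellation that isolates the desired identity is purely algebraic.
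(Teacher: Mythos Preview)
Your argument is correct and follows essentially the same route as the paper: both combine~(\ref{eq:thmEq1}) with the graph-pushforward representation and integration by parts~(\ref{eq:stokes}), then test against $\phi(x,y)=\psi(x)\,y_k$ to isolate $z_{k,m}-\partial y_k/\partial x_m$ and conclude by density. The only cosmetic difference is that the paper specializes to $\phi(x,y)=\psi(x)\,y_k$ from the outset rather than first deriving the general orthogonality relation for arbitrary $\phi$; your extra step is harmless but unnecessary.
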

\begin{proof}
Fix $k$ and $m$ and select $\phi(x,y) = \psi(x)y_k$ with $\psi(x) \in C^\infty(\Omega)$. Using the fact that $(\mu,\mu_\partial)$  are feasible in~(\ref{opt:LPinf}) and hence satisfy~(\ref{eq:thmEq1}) and by the assumption that $\mu$ and $\mu_\partial$ are supported on the graphs, we obtain
\[
\int_{\Omega} \Big [y_k(x) \frac{\partial \psi}{\partial x_m} + \psi(x)z_{k,m}(x)\Big]\, dx = \int_{\partial\Omega}\eta_m(x)y_k(x) \psi(x)\,d\sigma(x),
\]
where we disintegrated the measures $\mu$ and $\mu_\partial$ and used the facts that their respective $x$-marginals are equal to the Lebesgue measure on $\Omega$ respectively the surface measure $\sigma$. Applying the integration by parts formula~(\ref{eq:stokes}) to the right hand side of this equation yields
\[
\int_{\Omega} \psi(x) \Big [z_{k,m}(x) -\frac{\partial y_k}{\partial x_m}(x)\Big]\, dx = 0 .
\]
Since $\Omega$ is compact and $\psi$ an arbitrary smooth function, this implies that~(\ref{eq:lemEq}) holds for almost all $x\in\Omega$. If $y_k \in C^1$ and $z_k \in C$, this implies that in fact (\ref{eq:lemEq}) holds for all $x\in\Omega$.
\end{proof}

Now we will prove a result establishing that the LP~(\ref{opt:LPinf}) provides bounds on the optimal value of the nonconvex optimization problem~(\ref{opt:ocp_uncont}) and therefore on the functional~(\ref{eq:obj}) evaluated along the solutions to the PDE~(\ref{eq:pdeEntire}).

\begin{theorem}\label{thm:LPbound}
Let $p_{\mr{inf}}$ and $p_{\mr{sup}}$ denote the optimal values of~(\ref{opt:ocp_uncont}) with infimum respectively supremum and similarly let $\hat p_{\mr{inf}}$ and $\hat p_{\mr{sup}}$ denote the respective optimal values for~(\ref{opt:LPinf}). Then we have
\begin{subequations}\label{eq:bounds}
\begin{align}
\hat p_{\mr{inf}} &\le p_{\mr{inf}},\label{eq:lb} \\ 
\hat p_{\mr{sup}} &\ge p_{\mr{sup}} \label{eq:ub}.
\end{align}
\end{subequations}
\end{theorem}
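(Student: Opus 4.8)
The plan is to show that the infinite-dimensional LP~(\ref{opt:LPinf}) is a \emph{relaxation} of the nonconvex problem~(\ref{opt:ocp_uncont}): every point feasible in~(\ref{opt:ocp_uncont}) induces a point feasible in~(\ref{opt:LPinf}) with the \emph{same} objective value. Once this ``lifting'' is in place, both inequalities follow at once from the elementary fact that taking an infimum (resp.\ supremum) over a larger set can only decrease (resp.\ increase) the optimal value: the image of the feasible set of~(\ref{opt:ocp_uncont}) is contained in the feasible set of~(\ref{opt:LPinf}), so $\hat p_{\mr{inf}} \le p_{\mr{inf}}$ and $\hat p_{\mr{sup}} \ge p_{\mr{sup}}$.

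Concretely, I would fix an arbitrary solution $y(\cdot)$ feasible in~(\ref{opt:ocp_uncont}), which by Remark~\ref{rem:regul} has the regularity required by Theorem~\ref{thm:linearRepGen}, and form the associated occupation and boundary measures $(\mu,\mu_\partial)$ via~(\ref{eq:occupMeas}) and~(\ref{eq:boundaryMeas}), splitting $\mu_\partial = \sum_{i=1}^{n_{\mr{b}}} \mu_{\partial_i}$ as in~(\ref{eq:boundMeasDecomp}) by restricting $\mu_\partial$ to each boundary piece $\partial\Omega_i\times\Yf\times\Zf$. The first task is to certify that this tuple is feasible in~(\ref{opt:LPinf}). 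The linear constraints~(\ref{eq:thmEq1})--(\ref{eq:thmEq3}) hold for every test function $\phi\in C^\infty(\Omega\times\Yf)$ directly by Theorem~\ref{thm:linearRepGen}. Membership in the cone $\Kc$ is immediate, since occupation and boundary measures are nonnegative by construction and supported on exactly the prescribed sets. For the normalization constraint it suffices to apply~(\ref{eq:boundaryMeasFun}) with $h(x,y,z)=\psi(x)$: the $x$-marginal of $\mu_\partial$ equals $\sigma$, so the restriction $\mu_{\partial_i}$ has $x$-marginal equal to $\sigma$ on $\partial\Omega_i$, which is precisely the required equality.

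It then remains to match objective values. Applying~(\ref{eq:occupMeasFun}) with $h=L$ and~(\ref{eq:boundaryMeasFun}) with $h=L_\partial$ (both bounded Borel measurable, the latter piecewise polynomial), the functional $J\big(y(\cdot)\big)$ in~(\ref{eq:obj}) coincides exactly with the LP cost in~(\ref{opt:LPinf}) evaluated at $(\mu,\mu_{\partial_1},\ldots,\mu_{\partial_{n_{\mr{b}}}})$. Combining feasibility with this objective identity yields the claimed containment of feasible sets with preserved cost, and the two inequalities~(\ref{eq:lb}) and~(\ref{eq:ub}) follow. (If~(\ref{opt:ocp_uncont}) has no feasible $y(\cdot)$, the bounds hold trivially with the usual conventions $p_{\mr{inf}}=+\infty$, $p_{\mr{sup}}=-\infty$.)

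Given Theorem~\ref{thm:linearRepGen}, no step is genuinely deep; the argument is essentially bookkeeping. The only point meriting mild care is the boundary decomposition: one must check that restricting $\mu_\partial$ to each $\partial\Omega_i$ lands in the correct cone $\Mc(\partial\Omega_i\times\Yf\times\Zf)_+$ and that the pieces sum back to $\mu_\partial$, which rests on the partition~(\ref{eq:boundPartiotion}) being disjoint up to a $\sigma$-null set---the same fact already used to justify uniqueness of~(\ref{eq:boundMeasDecomp}), so nothing new is needed. I expect the main (still minor) obstacle to be confirming that $J\big(y(\cdot)\big)$ is well-defined and finite along feasible $y(\cdot)$ so that the objective-matching identity is meaningful, but this is guaranteed by the $W^{k,\infty}$ regularity of $y(\cdot)$, continuity, and compactness of $\Omega$ already assumed.
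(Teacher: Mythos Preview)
Your proposal is correct and follows essentially the same approach as the paper's own proof: embed each feasible $y(\cdot)$ via its occupation and boundary measures using Theorem~\ref{thm:linearRepGen}, verify feasibility in~(\ref{opt:LPinf}), match objective values through~(\ref{eq:occupMeasFun}) and~(\ref{eq:boundaryMeasFun}), and conclude by the inf/sup monotonicity argument. If anything, you are slightly more thorough than the paper in explicitly verifying the marginal normalization constraint and in handling the infeasible edge case.
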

\begin{proof}
The result follows immediately from  Theorem~\ref{thm:linearRepGen}. Indeed, given any $y(\cdot)$ satisfying the constraints of (\ref{opt:ocp_uncont}) (i.e., solving the PDE (\ref{eq:pdeEntire})), Theorem~\ref{thm:linearRepGen} guarantees that (\ref{eq:thmEq1})-(\ref{eq:thmEq3}) are satisfied by the corresponding occupation and boundary measures $(\mu,\mu_\partial)$ defined by~(\ref{eq:occupMeas}) and (\ref{eq:boundaryMeas}), which are nonnegative measures with supports in $(\Omega\times \Yf\times \Zf)$ respectively $(\partial\Omega\times \Yf\times \Zf)$ and therefore feasible in~(\ref{opt:LPinf}), after performing the decomposition~(\ref{eq:boundMeasDecomp}). The value of the objective functional in~(\ref{opt:LPinf}) associated to this occupation and boundary measures coincides with $J(y(\cdot))$ by virtue of~(\ref{eq:occupMeasFun}) and (\ref{eq:boundaryMeasFun}).  Therefore, the feasible set of~(\ref{opt:ocp_uncont}) is a subset of the feasible set of~(\ref{opt:LPinf}) and the objective functionals coincide on the intersection of the two. As a result, (\ref{eq:lb}) and (\ref{eq:ub}) necessarily hold.
\end{proof}

\begin{remark}\label{rem:tightBounds} It is an interesting and challenging question to investigate  whether the inequalities in~(\ref{eq:lb}) and (\ref{eq:ub}) are strict. In other words, to investigate whether the infinite-dimensional LP~(\ref{opt:LPinf}) is a relaxation or an equivalent reformulation of~(\ref{opt:ocp_uncont}). If (\ref{eq:lb}) and (\ref{eq:ub}) turn out to be relaxations (i.e., the inequalities are strict), one could try to add additional linear or convex constraints to~(\ref{opt:LPinf}) that are necessarily satisfied by the occupation and boundary measures of any solution to~(\ref{eq:pdeEntire}), thereby tightening the constraint set of~(\ref{opt:LPinf}), with the aim to reduce the gap between the optimal values. One type of inequalities, valid for a class of conservation laws, are the so-called entropy inequalities originally introduced in~\cite{diperna1985measure} in a scalar setting and subsequently generalized in, e.g., \cite{brenier2011weak}. The situation, however, appears to be more complex for the general formulation treated in this work, espetially in the controlled setting treated in Section~\ref{sec:statement_cont}.  Let us mention that the absence of such a gap was proven in~\cite{vinter1993convex} for the problem of optimal control of ordinary differential equations and in~\cite{kordaROA, kordaMCI} for the problems of region of attraction and invariant set computation.
\end{remark}

\section{Computable bounds using SDP relaxations}\label{sec:SDPuncont}
In this section we describe how to compute upper and lower bounds on the optimal value of the infinite-dimensional LP~(\ref{opt:LPinf}), which, by Theorem~\ref{thm:LPbound}, will provide bounds on the optimal value of~(\ref{opt:ocp_uncont}). The main tool for doing so is the so-called moment-sums-of-squares or Lasserre hierarchy of semidefinite programming (SDP) relaxations, originally developed for global optimization of polynomials in~\cite{lasserre2001global} and later extended to optimal control of ordinary differential equations~(e.g., \cite{lasserre2008nonlinear,korda2016controller}).

The main step of the approximation procedure is a construction of a sequence of finite-dimensional, SDP representable, cones approximating the cone of nonnegative measures $\Mc(\Kf)_+$ from the \emph{outside}, where $\Kf$ is a basic semialgebraic set of the form
\[
\Kf = \{x \in \Rb^n \mid g_i(x)\ge0, \; i=1,\ldots, n_g\}.
\]
In our case, the set $\Kf$ will be either $\Omega\times \Yf\times \Zf$ or $\partial\Omega_i\times \Yf\times \Zf$, $i=1,\ldots, n_{\mr{b}}$. For now, we shall work with an arbitrary basic semialgebraic set $\Kf$. The finite-dimensional outer approximations $\Mc_d(\Kf)_+$ indexed by the degree $d$ are given by
 \begin{equation}\label{eq:Msup_def}
 \Mc_d(\Kf)_+ = \{ \bs s \in \Rb^{\binom{n+d}{d}} \mid M_d(\bs s) \succeq 0,\;  M_d(g_i, \bs s) \succeq 0, \;i\in\cel{1}{n_g} \},
 \end{equation}
 where $M_d(\bs s)$ and $M_d(g_i, \bs s)$ are the so-called \emph{moment} and \emph{localizing} matrices and $\succeq$ denotes positive semidefiniteness. The moment and localizing matrices are given by
 \begin{subequations}\label{eq:momMatdef}
\begin{align}
M_d(\bs s) &= l_{\bs s}(\beta_{d/2}\beta_{d/2}^\top) \label{eq:momMatOnlyDef} \\
M_d(g_i, \bs s) &= l_{\bs s}(\beta_{\bar d_i}\beta_{\bar d_i}^\top g_i) \label{eq:locMatOnlyDef},
\end{align} 
\end{subequations}
where $\bar d_i = \lfloor (d - \mr{deg}\,g_i )/ 2\rfloor$ and $\beta_d$ is the basis vector of all monomials of total degree at most $d$ with a given ordering, choice of which is immaterial. For example, with the graded lexicographic ordering and $n = 2$, $d = 3$, one has
\[
\beta_d =  [1,\,x_1,\,x_2,\,x_1^2,\,x_1x_2,\,x_2^2,\,x_1^3,\,x_1^2x_2,\,x_1x_2^2,\,x_2^3]^\top.
\]
The symbol $l_{\bs s}(\cdot)$ denotes the Riesz functional defined for any polynomial\footnote{In~(\ref{eq:momMatdef}), the Riesz functional $\ell_{\bs s}(\cdot)$ is applied elementwise to the matrix polynomials $\beta_{d/2}(x) \beta_{d/2}(x)^\top$ and $\beta_{\bar d_i}(x) \beta_{\bar d_i}(x)^\top$.} of the form \[
p(x) = \sum_{|\alpha| \le d} c_\alpha x^\alpha\]
 by
\begin{equation}\label{eq:riesz}
l_{\bs s}(p) = \sum_{|\alpha| \le d}  c_\alpha  s_\alpha = \bs c^\top \bs s,
\end{equation}
where we are using a  multi-index notation, i.e., $x^\alpha = x_1^{\alpha_1} \cdot\ldots \cdot x_n^{\alpha_n}$, $|\alpha| = \sum_{i=1}^n\alpha_i$ and the coefficients $\bs c = ( c_\alpha)_{|\alpha| \le d}\in \Rb^{\binom{n+d}{d}}$ and the truncated moment vector $\bs s = (s_\alpha)_{|\alpha| \le d} \in \Rb^{\binom{n+d}{d}}$ have the same ordering. Notice in particular that if $\bs s$ is in fact a truncated moment vector of a given measure $\mu \in \Mc(\Kf)_+$, i.e.,
\[
s_\alpha = \int_{\Kf} x^\alpha\,d\mu,
\]
then
\[
  \int_{\Kf} p\,d\mu = \sum_{|\alpha| \le  d} c_\alpha \int_{\Kf} x^\alpha\, d\mu = l_{\bs s}(p).
\]
The Riesz functional is, however, defined for any vector $\bs s$, not necessarily equal to a truncated moment vector of a nonnegative measure. We note that the cone $M_d(\Kf)$ is a spectrahedron (i.e., a feasible set of a linear matrix inequality) and hence linear functionals can be minimized or maximized over this set using convex SDPs. This follows immediately from the fact that the matrices $M_d(\bs s)$ and $M_d(g_i, \bs s)$ are symmetric and depend linearly on~$\bs s$.

Now we apply these general considerations to our setting. For each $d\in\Nb$, we can define the SDP representable cones
\[
\Mc_d(\Omega\times \Yf\times \Zf)_+
\]
and
\[
\Mc_d(\partial\Omega_i \times \Yf\times \Zf)_+, \;\; i=1,\ldots, n_{\mr{b}},
\]
where $\partial\Omega_i$ are the basic semialgebraic elements of the partition~(\ref{eq:boundPartiotion}) of the boundary of $\Omega$.

The last step in the approximation procedure is to restrict the test functions in the linear equality constraints of~(\ref{opt:LPinf}) (i.e., in~(\ref{eq:thmEq1})-(\ref{eq:thmEq3})) to monomials of total degree no more than $d'$ (to be specified later), i.e., to pick $\phi(x,y) = x^\alpha y^\gamma$ with $|(\alpha,\gamma)| \le d'$. Assuming that the unit surface normal vector $\eta(x)$ is polynomial on each element $\partial\Omega_i$ of the partition of the boundary, we observe that all integrands in~(\ref{eq:thmEq1})-(\ref{eq:thmEq3}) are polynomials in $(x,y,z)$. The degree $d'$ is selected to be the largest possible degree such that the degree of all the polynomials appearing under the integral sign in~(\ref{eq:thmEq1})-(\ref{eq:thmEq3}) is no more than $d$ (the exact value of $d'$ depends on the degrees of $F$, $B_{i,j}$ and $G$). The test functions $\psi$ are chosen as $\psi(x) = x^\alpha$, $|\alpha|\le d$. With this choice of test functions, the constraints of (\ref{opt:LPinf}) can be rewritten as the finite-dimensional linear equation
\[
A_d \bs s = b_d
\]
for some matrix and $A_d$ and vector $b_d$, where
\[
\bs s := \begin{bmatrix}\bs s_\mu \\ \bs s_{\partial_1} \\ \vdots \\ \bs s_{\partial_{n_{\mr{b}}}}  \end{bmatrix}
\]
with $\bs s_\mu$ being the truncated vector of moments of $\mu$ up to the total degree~$d$ and analogously $\bs s_{\partial_i}$ being the truncated vector of moments of $\mu_{\partial_i}$ from the decomposition~(\ref{eq:boundMeasDecomp}). Similarly, assuming the degree of the polynomials $L$ and $L_\partial$ is at most~$d$, the objective functional of~(\ref{opt:LPinf}) can be rewritten as
\[
c_d^\top \bs s
\]
for some vector $c_d$. The degree $d$ finite-dimensional SDP relaxation of~(\ref{opt:LPinf}) then reads
\begin{equation}\label{opt:SDP}
\begin{array}{ll}
\underset{\bs s}{\mbox{inf}\,/\,\mbox{sup}} & c_d^\top \bs s  \\
& A_d\bs s = b_d\\
& \bs s \in \Kc_d,
\end{array}
\end{equation}
where
\[
\Kc_d := \Mc_d(\Omega\times \Yf\times \Zf)_+ \times \Mc_d(\partial\Omega_1\times \Yf\times \Zf)_+\times \ldots\times \Mc_d(\partial\Omega_{n_{\mr{b}}}\times \Yf\times \Zf)_+
\]
is a convex SDP representable cone and therefore~(\ref{opt:SDP}) is indeed an SDP problem. The entries of the matrix $A_d
$ and the vectors $b_d$ and $c_d$ depend on the coefficients of the polynomials $F$, $B_{i,j}$ and the polynomials comprising the piecewise polynomial function $G$. With the exception of $c_d$, these are rather cumbersome expressions and we omit them for brevity. Fortunately, the entire SDP~(\ref{opt:SDP}) can be assembled automatically using existing software for generalized moment problems such as Gloptipoly~3, \cite{gloptipoly3}, with the only input being $F$, $B_{i,j}$, $G$ and the relaxation degree $d$. The SDP can then be solved by any of the available SDP solvers such as MOSEK or SeDuMi~\cite{sedumi}.

The optimal values of the SDP~(\ref{opt:SDP}) provide bounds on the optimal values of the infinite dimensional LP~(\ref{opt:LPinf}) and hence also on the optimal values of the original problem~(\ref{opt:ocp_uncont}), via Theorem~\ref{thm:LPbound}; in addition, assuming the sets $\Omega$, $\Yf$ and $\Zf$ are compact, these SDP bounds  converge to the bounds provided by the infinite dimensional LP~(\ref{opt:LPinf}):
\begin{theorem}\label{thm:conv}
Let $\hat{p}_{\mr{inf},d}^{\mr{SDP}}$ resp. $\hat{p}_{\mr{sup},d}^{\mr{SDP}}$ denote the optimal values of the degree $d$ SDP relaxations~(\ref{opt:SDP}) with infimum resp. supremum.  Then we have
\begin{subequations}
\begin{align}
\hat{p}_{\mr{inf},d}^{\mr{SDP}} \le \hat{p}_{\mr{inf},{d+1}}^{\mr{SDP}} \le \hat{p}_{\mr{inf}} \le p_{\mr{inf}} \label{eq:SDPlb}\\
\hat{p}_{\mr{sup},d}^{\mr{SDP}} \ge \hat{p}_{\mr{sup},d+1}^{\mr{SDP}} \ge \hat{p}_{\mr{sup}} \ge p_{\mr{sup}}\label{eq:SDPub}
\end{align}
\end{subequations}
for all $d\in\Nb$. In addition
\begin{subequations}
\begin{align}
\lim_{d\to\infty}\hat{p}_{\mr{inf},d}^{\mr{SDP}} = \hat{p}_{\mr{inf}}  \label{eq:SDPlbConv}\\
\lim_{d\to\infty}\hat{p}_{\mr{sup},d}^{\mr{SDP}} = \hat{p}_{\mr{sup}}  \label{eq:SDPubConv}
\end{align}
\end{subequations}
provided that $\Omega$, $\Yf$ and $\Zf$ are compact and the polynomials defining these sets satisfy the Archimedianity condition\footnote{The Archimedianity condition (e.g., \cite[Definition 3.18]{laurent2009sums}) is a nonrestrictive algebraic condition implying compactness. This condition can always be satisfied by adding redundant ball constraints of the form $N^2 -x^\top x \ge 0$ to the description of the \emph{compact} basic semialgebraic sets $\Omega$, $\Yf$, $\Zf$ for a sufficiently large $N$. Adding such ball constraints also ensures that the infimum or supremum in~(\ref{opt:SDP}) is always attained. However, Theorem~\ref{thm:conv} is valid whenever the Archimedianity condition holds, irrespective of whether redundant ball constraints are present.\label{foot:arch}}.
\end{theorem}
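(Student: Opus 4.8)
The plan is to separate the two kinds of claims: the chain of inequalities (monotonicity in $d$ together with the fact that the SDP values bound the LP values) is essentially bookkeeping about nested feasible sets, whereas the convergence statements (\ref{eq:SDPlbConv})--(\ref{eq:SDPubConv}) are the analytic core and will rest on the Archimedian hypothesis through Putinar's representation theorem. First I would dispose of the inequalities. The outermost bounds $\hat{p}_{\mr{inf}} \le p_{\mr{inf}}$ and $\hat{p}_{\mr{sup}} \ge p_{\mr{sup}}$ are exactly Theorem~\ref{thm:LPbound}, so nothing new is needed there. For the middle inequalities I would observe that the degree-$d$ problem (\ref{opt:SDP}) is genuinely a relaxation of (\ref{opt:LPinf}): given any measures feasible in (\ref{opt:LPinf}), their truncated moment vectors satisfy the moment and localizing PSD constraints defining $\Mc_d(\cdot)_+$ and satisfy the finitely many equalities obtained by restricting the test functions to monomials of degree $\le d'$, hence are feasible in (\ref{opt:SDP}) with identical cost, giving $\hat{p}_{\mr{inf},d}^{\mr{SDP}} \le \hat{p}_{\mr{inf}}$ and $\hat{p}_{\mr{sup},d}^{\mr{SDP}} \ge \hat{p}_{\mr{sup}}$. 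For monotonicity in $d$ I would use nestedness: the moment and localizing matrices at level $d$ are principal submatrices of those at level $d+1$, so positive semidefiniteness is inherited, and the equality constraints at level $d$ form a subset of those at level $d+1$; truncating any point feasible at level $d+1$ therefore yields a point feasible at level $d$ with the same cost, which gives $\hat{p}_{\mr{inf},d}^{\mr{SDP}} \le \hat{p}_{\mr{inf},d+1}^{\mr{SDP}}$ and the reverse inequality for the supremum.

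For the convergence I would run the standard primal compactness argument of the Lasserre hierarchy. Treating the infimum case, the inequalities just established show that $\hat{p}_{\mr{inf},d}^{\mr{SDP}}$ is nondecreasing and bounded above by $\hat{p}_{\mr{inf}}$, so a limit $\ell \le \hat{p}_{\mr{inf}}$ exists and it remains to prove $\ell \ge \hat{p}_{\mr{inf}}$. I would take near-optimal points $\bs s^{(d)}$ of (\ref{opt:SDP}) and first secure uniform-in-$d$ bounds on every fixed coordinate $s_\alpha^{(d)}$. This is where compactness enters: after adjoining the redundant ball constraints $N^2 - \|(x,y,z)\|^2 \ge 0$ supplied by the Archimedian hypothesis, the localizing-matrix inequality for this ball together with $M_d(\bs s^{(d)}) \succeq 0$ yields the a priori estimates $|s_\alpha^{(d)}| \le \max\{1,N\}^{|\alpha|}\, s_0^{(d)}$ (via the $2\times 2$ minors relating $s_\alpha$, $s_{2\alpha}$ and $s_0$), while the normalization constraint on the $x$-marginal fixes the mass $s_0^{(d)}$. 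With these bounds in hand a diagonal extraction produces a subsequence along which every coordinate converges to an infinite sequence $\bs s^{\ast}$.

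The decisive step is then to certify that $\bs s^{\ast}$ arises from genuine measures feasible in (\ref{opt:LPinf}). Passing to the limit in the PSD conditions, which survive entrywise convergence because the positive semidefinite cone is closed, shows that $M_d(\bs s^{\ast}) \succeq 0$ and $M_d(g_i,\bs s^{\ast}) \succeq 0$ for every $d$; by Putinar's Positivstellensatz, valid precisely under Archimedianity, each block of $\bs s^{\ast}$ is therefore the moment sequence of a nonnegative measure supported on the corresponding compact set $\Omega\times\Yf\times\Zf$ or $\partial\Omega_i\times\Yf\times\Zf$. Each equality constraint of (\ref{opt:LPinf}) attached to a fixed monomial test function is linear in finitely many moments and is enforced in (\ref{opt:SDP}) for all sufficiently large $d$, so it passes to the limit and holds for $\bs s^{\ast}$; since it then holds for every polynomial $\phi$, a $C^1$-density argument (polynomials are dense in $C^1$ on the compact domain, and only first derivatives of $\phi$ appear in (\ref{eq:thmEq1})--(\ref{eq:thmEq2})) upgrades it to all $\phi \in C^\infty(\Omega\times\Yf)$, so the recovered measures are feasible in (\ref{opt:LPinf}). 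Because $L$ and $L_\partial$ have fixed degree, the objective depends on only finitely many moments, so $c_d^\top \bs s^{(d)} \to \ell$ coincides with the LP objective at the recovered feasible point, whence $\ell \ge \hat{p}_{\mr{inf}}$ and therefore $\ell = \hat{p}_{\mr{inf}}$. The supremum case is identical after reversing the inequalities.

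The hard part will be this last passage to the limit: obtaining the uniform moment bounds and then invoking Putinar's theorem is exactly what forces the Archimedian hypothesis, and the only genuinely nonautomatic point is the $C^1$-density step that reconciles the monomial test functions used in the relaxations with the smooth test functions demanded by (\ref{opt:LPinf}). Everything else is routine nestedness and closedness bookkeeping.
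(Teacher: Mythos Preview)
Your proposal is correct and follows essentially the same route as the paper: nestedness of the relaxations for the inequality chain, then the standard primal compactness argument (bound the zeroth moments, use Archimedianity to bound all moments, diagonal extraction, closedness of the PSD cone, Putinar to recover representing measures, pass the linear constraints and objective to the limit).

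One point where the paper is more explicit than you are: you write that ``the normalization constraint on the $x$-marginal fixes the mass $s_0^{(d)}$'', but note that the LP~(\ref{opt:LPinf}) only carries an explicit marginal constraint on the \emph{boundary} measures $\mu_{\partial_i}$, not on $\mu$. The paper bounds $\bs s_{\partial_i}^d(0)$ by taking $\psi=1$ in that constraint, and then separately bounds $\bs s_\mu^d(0)$ by choosing $\phi=x_m$ in~(\ref{eq:thmEq1}), which yields $\int 1\,d\mu = \int_{\partial\Omega} x_m\eta_m\,d\sigma$; this value is in turn pinned down by the boundary marginal constraint once $d$ is large enough. Your phrase ``adjoining the redundant ball constraints supplied by the Archimedian hypothesis'' is also slightly loose, since Archimedianity does not literally put a ball localizing matrix into the SDP; the paper simply invokes Archimedianity plus the structure of the moment/localizing matrices to pass from bounded $s_0$ to bounded $s_\alpha$, which is the same content. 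Finally, you make the $C^1$-density step explicit when upgrading from polynomial to smooth test functions; the paper absorbs this into a one-word ``by continuity''. None of these differences is a gap.
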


\begin{proof}
The proof is a standard argument, following e.g., \cite{lasserre2001global}. The first two inequalities in~(\ref{eq:SDPlb}) and (\ref{eq:SDPub}) follow by construction since the constraint set of the SDP~(\ref{opt:SDP}) is a relaxation of the constraint set of the LP~(\ref{opt:LPinf}) and since the constraint set of (\ref{opt:SDP}) tightens with increasing $d$. These facts follow immediately from the definition of the truncated moment cone~(\ref{eq:Msup_def}) using the moment and localizing matrices~(\ref{eq:momMatdef}) and from the fact that more equality constraints are added to~(\ref{opt:SDP}) with increasing $d$, corresponding to imposing the constraints of~(\ref{opt:LPinf}) for increasing degrees of the test functions $\phi$ and $\psi$. The key step in proving convergence of the lower and upper bounds consists in establishing boundedness of the truncated moment sequences optimal in~(\ref{opt:SDP}) (in fact the argument below establishes boundedness of any feasible truncated moment sequence). Let therefore $\bs s^d$ denote a truncated moment sequence optimal\footnote{For simplicity of the argument we assume that the infimum or supremum is attained in~(\ref{opt:SDP}); if it is not, the result follows by considering minimizing sequences. See also Footnote~\ref{foot:arch}.} in~(\ref{opt:SDP}) with components $\bs s^d_\mu(0)$ and $\bs s^d_{\partial_i}(0)$, $i=1,\ldots,n_{\mr{b}}$. The main step is to establish boundedness of the  first components, i.e., of $\bs s^d_\mu(0)$ and $\bs s^d_{\partial_i}(0)$, $i=1,\ldots,n_{\mr{b}}$. For $\bs s^d_{\partial_i}(0)$ this follows readily by considering the  test function $\psi = 1$ for the last constraint of~(\ref{opt:LPinf}), which implies $\bs s^d_{\partial_i}(0)  = \int_{\partial\Omega_i}1d\sigma <\infty$. For $\bs s^d_{\mu}(0)$ we consider $\phi = x_m$ in~(\ref{eq:thmEq1}) for any $m$ such that  $\eta_m \ne 0$, obtaining $\int 1d\mu = \int_{\partial\Omega_m} x_m \eta_m(x)\,d\sigma(x) < \infty$, which for $d \ge \mr{deg}(\eta_m) + 1$ implies that $\bs s^d_\mu(0) = \int_{\partial\Omega_m} x_m \eta_m(x)\,d\sigma(x) < \infty$ by virtue of the last equality constraint of~(\ref{opt:LPinf}) evaluated for all $\psi = x^\alpha$, $|\alpha|\le d$. Having proved that $\bs s^d_\mu(0)$ and $\bs s^d_{\partial_i}(0)$, $i=1,\ldots,n_{\mr{b}}$, are bounded, the componentwise boundedness of the whole moment sequences as $d$ tends to infinity follows from the Archimedianity condition and the structure of the moment and localizing matrices. Therefore, using a standard diagonal argument, there exists a subsequence $(\bs s^{d_i})$ converging elementwise to some sequence $\bs s^\star$. By continuity of the minimum eigenvalue of a matrix it holds that $\bs s^\star_\mu \in \Mc_d(\Omega\times \Yf\times \Zf)_+$ and $\bs s^\star_{\partial_i} \in \Mc_d(\partial\Omega_i\times \Yf\times \Zf)_+$ for all $i=1,\ldots,n_{\mr{b}}$ and all $d\in \Nb$. This, the Archimedianity condition and the Putinar positivestellensatz ensure that the sequences $\bs s^\star_\mu$ and $\bs s^\star_{\partial_i}$
 admit nonnegative representing Borel measures $\mu^\star$, $\mu_{\partial_i}^\star$. For concreteness, consider infimum in~(\ref{opt:LPinf}), the argument being the same for supremum. By continuity, the measures $\mu^\star$, $\mu^\star_{\partial_i}$ satisfy the constraints of~(\ref{opt:LPinf}) and hence the objective value attained by them is greater than or equal to $\hat{p}_{\mr{inf}}$. On the other hand, by continuity of the objective functional, the value attained by these measures is equal to $\lim_{d\to\infty}\hat{p}_{\mr{inf},d}^{\mr{SDP}}$. But $\hat{p}_{\mr{inf},d}^{\mr{SDP}} \le  \hat{p}_{\mr{inf}}$ for all $d$ and hence necessarily $\lim_{d\to\infty}\hat{p}_{\mr{inf},d}^{\mr{SDP}} = \hat{p}_{\mr{inf}}$.
\end{proof}

\subsection{Non-polynomial boundary conditions}\label{sec:nonPolBnd}
In this section we treat the case of Dirichlet boundary conditions of the form $y(x) = h_i(x)$, $x\in\partial\Omega_i$, $i \in \mathcal{I}$, $\mathcal{I}\subset \{1,\ldots,n_{\mr{b}}\}$, with $h_i$ being an arbitrary integrable function. We note that the approach presented in this section is preferable also if $h_i$ is polynomial, provided that one wishes to impose the simple constraints $y(x) = h_i(x)$ rather than the more general constraint~(\ref{eq:bnd}). This type of boundary condition translates immediately to the condition
\begin{equation}\label{eq:bndNonPol}
\int_{\partial\Omega_i\times\Yf\times\Zf} \phi(x,y) \, d\mu_{\partial_i}(x,y,z) = \int_{\partial\Omega_i} \phi(x,h_i(x))\, d\sigma(x),
\end{equation}
where we used~(\ref{eq:boundaryMeasFun}) with $g(x,y,z) = \phi(x,y)$, substituted the constraint $y(x) = h_i(x)$ and used the decomposition~(\ref{eq:boundMeasDecomp}). We note that, given $\phi$, the right hand side of~(\ref{eq:bndNonPol}) is constant and hence~(\ref{eq:bndNonPol}) is affine in $\mu_{\partial_i}$. This equation can then be used instead of~(\ref{eq:thmEq3}) in the infinite dimensional LP~(\ref{opt:LPinf}). As far as the finite-dimensional SDP relaxation goes, selecting $\phi$ to be the vector of all monomials in~$(x,y)$ up to degree $d$  immediately leads to a system of  linear equations in the truncated moment sequence $\bs s_{\partial_i}$; the right hand side of this equation can be precomputed provided that the integrals $\int_{\partial\Omega_i} x^\alpha h_i^\beta(x),d\sigma(x)$, $|(\alpha,\beta)|\le d$, can be evaluated (either analytically or numerically). This set of linear equations is then used as a part of the equality constraints of SDP~(\ref{opt:SDP}). We note that the constraint~(\ref{eq:bndNonPol}) can be combined with~(\ref{eq:thmEq3}) modeling other boundary conditions of the generic type~(\ref{eq:bnd}).

\subsection{Periodic boundary conditions}\label{sec:periodicBnd}
Assume that there exists a polynomial mapping $h:\partial\Omega_i\to \partial\Omega_j$, $i\ne j$, preserving the surface measure, i.e., $\sigma(h^{-1}(A)) = \sigma(A)$ for all Borel $A \subset \Omega_{\partial_j}$ (in other words, $\sigma_{|\partial\Omega_j}$ is the push-forward by $h$ of $\sigma_{|\partial\Omega_i}$). The goal is to impose the constraint
\begin{equation}\label{eq:periodConst}
y(x) = y(h(x))\;\; \forall x\in \partial\Omega_i.
\end{equation}
For any test function $\phi(x,y)$ we have
\[
\int_{\partial\Omega_j} \phi(x,y(x))\,d\sigma(x) = \int_{\partial\Omega_i} \phi(h(x),y(h(x)))\,d\sigma = \int_{\partial\Omega_i} \phi(h(x),y(x))\,d\sigma,
\]
where we used the measure preserving property of $\sigma$ in the first equality and the constraint~(\ref{eq:periodConst}) in the second. Using~(\ref{eq:boundaryMeasFun}) and~(\ref{eq:boundMeasDecomp}), we get
\begin{equation}\label{eq:periodicBnd}
\int_{\partial\Omega_j\times\Yf\times\Zf} \phi(x,y)\,d\mu_{\partial_j}(x,y,z)  = \int_{\partial\Omega_i\times\Yf\times\Zf} \phi(h(x),y)\,d\mu_{\partial_i}(x,y,z),
\end{equation}
which is a linear equation in $(\mu_{\partial_i},\mu_{\partial_j})$. Selecting $\phi(x,y)$ to be the vector of all monomials up to degree $d - \mr{deg}(h)$ leads to a set of equality constraints among the  moment sequences $(\bs s_{\partial_i},\bs s_{\partial_j})$ of the measures $(\mu_{\partial_i},\mu_{\partial_j})$ truncated up to degree $d$. These equality constraints are then added to the constraints of the SDP~(\ref{opt:SDP}).

\paragraph{Example} {\it As an example, consider the rectangular domain $\Omega =  [0,1] \times [0,a]$, where we wish to make the second variable periodic. This can be used to model a PDE with one time variable and one spatial variable on a circular spatial domain. This boundary condition is imposed by~(\ref{eq:periodicBnd}) with $h(x) = (x_1,x_2 + a)$.}

\subsection{Surface normal vector}\label{sec:normalVec}
In this section we describe how the assumption of the unit surface vector $\eta$ being polynomial can be relaxed. Since the set $\Omega$ is basic semialgebraic of the form~(\ref{eq:omegaDef}), for each element $\Omega_{\partial,i}$ of the boundary partition~(\ref{eq:boundPartiotion}) there exists\footnote{The polynomials $h_i$ can be determined from the polynomials $g_j^\Omega$ defining $\Omega$ in~(\ref{eq:omegaDef}). Indeed, each $h_i$ is equal to the $g_j^\Omega$  for which $g_j^\Omega = 0$ on $\Omega_{\partial,i}$, provided that $\nabla g_j^\Omega \ne 0$ on $\Omega_{\partial,i}$ (otherwise it may be equal to a factor of $g_j^\Omega$).} a polynomial $h_i$ such that
\[
\eta(x) = \frac{\nabla h_i(x)}{\| \nabla h_i(x)\|},\quad x\in\Omega_{\partial,i}
\]
with $\nabla h_i \ne 0$ on $\Omega_{\partial,i}$. Now we can define the measures
\[
\sigma'_i = \frac{\sigma_i}{\|  \nabla h_i\|},
\]
where $\sigma_i$ is the restriction of the surface measure $\sigma$ to $\Omega_{\partial,i}$. With this notation, the integration by parts formula~(\ref{eq:stokes}) becomes
\begin{equation}\label{eq:stokesMod}
\sum_{i=1}^{n_{\mr{b}}}\int_{\partial\Omega} g(x) [\nabla h_i]_m(x)\,d\sigma'_i(x) = \int_{\Omega} \frac{\partial g}{\partial x_m}\,dx,\;\; m=1,\ldots,n,
\end{equation}
where $ [\nabla h_i]_m$ denotes the $m^{\mr{th}}$ component of $\nabla h_i$. In view of the definition of $\mu_\partial$~(\ref{eq:boundaryMeas}), it immediately follows that the equations~(\ref{eq:thmEq1})-(\ref{eq:thmEq3}) hold with $\eta_m$ replaced\footnote{To be precise, any integral of the form $\int g\, \eta_m\, d\mu_\partial$ is replaced by $\sum_{i=1}^{n_{\mr{b}}}\int g\, [\nabla h_i]_m  d\mu_{\partial_i}'$.}  by $[\nabla h_i]_m$ and $\mu_\partial$ replaced by
\[
\sum_{i=1}^{n_{\mr{b}}}\mu_{\partial_i}',
\]
where \[
\mu_{\partial_i}' = \frac{\mu_{\partial_i}}{\|\nabla h_i\|}.
\]
Crucially, since $h$ is polynomial, so is $\nabla h$ and therefore the integrands in (\ref{eq:thmEq1})-(\ref{eq:thmEq3}) are all polynomials as is required by the subsequent SDP relaxation procedure. The only modification to the infinite-dimensional LP (\ref{opt:LPinf}), which is the starting point for the relaxation, is the replacement of $\eta_m$ by $[\nabla h_i]_m$ in the constraints (\ref{eq:thmEq1})-(\ref{eq:thmEq3}) and the replacement of $\sigma$ by $\sigma'$ in the last equality constraints of this LP. The decision variables are re-labeled to $(\mu, \mu_{\partial_1}',\ldots,\mu_{\partial_{n_{\mr{b}}}}')$. If so desired, the boundary part of the original solution can be recovered as $\mu_{\partial_i} = \|\nabla h_i\| \mu_{\partial_i}'$.

The same considerations hold for the PDE control part of this paper treated in the subsequent sections.

\section{Problem statement (control)}\label{sec:statement_cont} 
In this section, we describe the problem setting for control of nonlinear PDEs. We consider a controlled nonlinear PDE of the form
\begin{subequations}\label{eq:pdeEntireCont}
\begin{align}
F(x,y(x),\Dc y(x)) + \sum_{i,j} B_{i,j}(x,y(x)) \frac{\partial ^2 y}{\partial x_i\partial x_j} & = C(x,y(x))u(x),\quad \hspace{5mm} x\in \Omega, \label{eq:pdeCont} \\ \label{eq:bndCont}
\forall i\in\{1,\ldots, n_{\mr{b}}\}\;\;G_i(x,y(x),\Dc y(x)) &= C_{\partial_i}(x,y(x)) u_{\partial_i}(x),\quad x \in \partial\Omega_i,
\end{align}
\end{subequations}
where is $u(\cdot)$ the distributed control and $u_{\partial_i}(\cdot)$ are the boundary controls defined on each element of the partition of the boundary~(\ref{eq:boundPartiotion}). The functions $C:\Omega\to \Rb^{n_F\times n_u}$ and $C_{\partial_i}:\partial\Omega_i\to \Rb^{n_{G_i}\times n_{u_i}}$ are given matrix functions with each entry being a multivariate polynomial in~$(x,y)$. Similarly, the vector functions $G_i:\Omega\times \Rb^{n_y}\times \Rb^{n_y\times n}\to\Rb^{n_{G_i}}$ are assumed to be multivariate polynomials in all variables. Otherwise, the setup is the same as in Section~\ref{sec:statement_anal}.

 The control goal is to minimize the functional
\begin{align}\label{eq:objCont}
J\big(y(\cdot), u(\cdot)\big) := &\int_{\Omega}L\left(x,y(x),\Dc y(x)\right)\,dx + \sum_{i=1}^{n_{\mr{b}}}\int_{\Omega_{\partial,i}}L_{\partial_i}\left(x,y(x),\Dc y(x)\right)\,d\sigma(x), \\
&+\int_{\Omega}L_u\left(x,y(x)\right)^\top u(x)\,dx + \sum_{i=1}^{n_{\mr{b}}}\int_{\Omega_{\partial,i}}L_{u_i}\left(x,y(x)\right)^\top u_{\partial_i}(x)\,d\sigma(x), 
\end{align}
subject to constraints on $y$, $\Dc y$ and the control inputs. Here $L$ and $L_{\partial_i}$ are polynomials and $L_u$ and $L_{u_i}$ are vectors of polynomials. This  leads to the following optimal control problem

\begin{equation}\label{opt:ocp_cont}
\begin{array}{ll}
\underset{y(\cdot),\, u(\cdot)}{\mbox{inf}\,/\,\mbox{sup}} & J\big(y(\cdot), u(\cdot)\big)  \\
\mbox{subject to} &  (\ref{eq:pdeCont}),\; (\ref{eq:bndCont})\\
& y(x) \in \Yf\; \forall x\in \Omega,\quad y(x)\in \Yf_{\partial_i}\;\;\;\, \forall x\in \partial\Omega_i,\;\; i\in \{1,\ldots,n_{\mr{b}}\} \\
& \Dc y(x) \in \Zf\; \forall x\in \Omega,\; \Dc y(x)\in \Zf_{\partial_i}\; \,\forall x\in \partial\Omega_i,\;\; i\in \{1,\ldots,n_{\mr{b}}\} \\
&u(x) \in \Uf\; \forall x\in\Omega, \quad u_{\partial_i}(x)\in \Uf_{\partial_i}\; \forall x\in \partial\Omega_i,\;\; i\in \{1,\ldots,n_{\mr{b}}\}
\end{array}
\end{equation}
where $\Yf$ and $\Zf$ as well as $\Yf_{\partial_i}$ and $\Zf_{\partial_i}$, $i=1,\ldots, n_{\mr{b}}$, are basic semialgebraic sets. The sets $\Uf$ and $\Uf_{\partial_i}$ are assumed to be the unit boxes
\begin{equation}\label{eq:boxInput}
\Uf = [0,1]^{n_u},\quad \Uf_{\partial_i} = [0,1]^{n_{u_i}}\,.
\end{equation}
We note that any box constraints of the from $[-u^{\mr{min}}_1,u^{\mr{max}}_1]\times\ldots\times [-u^{\mr{min}}_{n_u},u^{\mr{max}}_{n_u}]$ can be transformed to the unit boxes by a simple affine transformation of the functions $C$, $F$ resp. $C_{\partial_i}$, $G_i$.  The input constraints~(\ref{eq:boxInput}) therefore cover all box constraints which are the most commonly encountered type of constraint in practice.

%\begin{align}
%y(x) &\in \Yf\quad \forall x\in \Omega\quad y(x)\in \Yf_{\partial}\quad \forall x\in \partial\Omega \\
%Dy(x) &\in \Zf\quad \forall x\in \Omega \quad Dy(x)\in \Zf_{\partial}\quad \forall x\in \partial\Omega\\
%u(x) &\in \Uf\quad \forall x\in\Omega \quad u_\partial(x)\in \Uf_{\partial}\quad \forall x\in \partial\Omega
%\end{align}

\section{Linear representation (control)}\label{sec:linRepCont}
In this section we derive an infinite-dimensional linear program in the space of Borel measures whose solution will provide a lower bound on~(\ref{opt:ocp_cont}); we also show how the solution to this LP can be used to extract a controller in a feedback form. This infinite-dimension LP is then approximated in Section~\ref{sec:SDPcont} via finite-dimensional SDP relaxations, the solutions of which provide controllers for the PDE~(\ref{eq:pdeEntireCont}) in a feedback form. The approach closely follows the method of~\cite{korda2014controller,majumdar2014convex} developed for controlled ordinary differential equations.

As in Section~\ref{sec:linRep} we consider the occupation and boundary measures $\mu$ and $\mu_\partial$ defind by~(\ref{eq:occupMeas}) and (\ref{eq:boundaryMeas}), with the decomposition of the boundary measure~(\ref{eq:boundMeasDecomp}). In addition, we define the vectors of \emph{control measures} $\nu \in \Mc(\Omega\times \Yf)_+^{n_u}$ and $\nu_i \in \Mc(\partial\Omega_i\times \Yf)_+^{n_{u_i}}$
by
\begin{subequations}
\begin{align}
d\nu &= u\, d\bar\mu\label{eq:nuDef}\\
d\nu_{\partial_i} &= u_{\partial_i} d\bar\mu_{\partial_i},\label{eq:nuiDef}
\end{align}
\end{subequations}
where $\bar \mu$ respectively $\bar \mu_{\partial_i}$ are the $(x,y)$ marginals\footnote{The $(x,y)$ marginal of a measure $\mu\in \Mc(\Omega\times \Yf\times \Zf)_+$ is defined by $\bar{\mu}(A\times B) = \mu(A\times B\times \Zf)$ for all Borel sets $A\subset \Omega$, $B\subset \Yf$.} of $\mu$ respectively $\mu_{\partial_i}$.
In other words, we have $\nu \ll \bar\mu$ and $\nu_i \ll \bar\mu_{\partial_i}$ with  density (i.e, the Radon-Nikodym derivative) equal to $u(\cdot)$, respectively $u_{\partial_i}(
 \cdot)$. Here $\nu\ll \bar\mu$ signifies that the measure $\nu$ is absolutely continuous w.r.t. the measure $\bar\mu$. Therefore, for any bounded Borel measurable function $h:\Omega\times \Yf \to \Rb$ we have
\begin{subequations}
\begin{equation}
\int_{\Omega} h(x,y(x))\,u(x)\,dx = \int_{\Omega\times\Yf} h(x,y)\,d\nu(x,y),
\end{equation}
\begin{equation}
\int_{\partial\Omega_i} h(x,y(x))\,u_{\partial_i}(x)\,\sigma(x) = \int_{\partial\Omega_i\times\Yf} h(x,y)\,d\nu_{\partial_i}(x,y).
\end{equation}
\end{subequations}

The following theorem is an immediate generalization of~Theorem~\ref{thm:linearRepGen}.

\begin{theorem}\label{thm:linearRepCont}
Let $u(\cdot) \in L_\infty(\Omega,dx)$, $u_{\partial_i}(\cdot)\in L_\infty(\Omega,d\sigma)$, $i=1,\ldots,n_{\mr{b}}$, and let  $y(\cdot) \in W^{k,\infty}(\interior\Omega;{\bf Y})\cap C(\Omega;{\bf Y}) $ ($k=1$ if $B_{i,j} = 0$, $k=2$ otherwise) satisfy~(\ref{eq:pdeCont}) and (\ref{eq:bndCont}) almost everywhere with respect to the Lebesgue measure on~$\interior\Omega$ respectively the surface measure $\sigma$ on~$\partial\Omega$. If  $\mu$ and $\mu_\partial$ are defined by~(\ref{eq:occupMeas}) and (\ref{eq:boundaryMeas}) and $\nu$ and $\nu_{\partial_i}$ by~(\ref{eq:nuDef}) and (\ref{eq:nuiDef}), then for all $\phi \in C^\infty(\Omega\times \Yf)$
\begin{subequations}\label{eq:thmEqEntireCont}
\begin{align}\label{eq:thmEq1Cont}
\int_{\Omega\times \Yf\times \Zf} \left[ \frac{\partial \phi}{\partial x_m} + \sum_{k=1}^{n_y} \frac{\partial \phi}{\partial y_k}z_{k,m}\right]d\mu(x,y,z) -  \int_{\partial\Omega\times \Yf\times \Zf} \phi(x,y) \eta_m(x)\,d\mu_\partial(x,y,z) = 0
\end{align}
for all $m\in \{1,\ldots,n\}$ and
\begin{align}\label{eq:thmEq2Cont}
\int_{\Omega\times \Yf\times \Zf} \phi(x,y)F(x,y,z)\,d\mu(x,y,z) &\nonumber\\
&\hspace{-60mm} - \sum_{i,j}\int_{\Omega\times \Yf\times \Zf}  \phi(x,y)\left[\frac{\partial B_{i,j}}{\partial x_j}+ \sum_{k=1}^{n_y}\frac{\partial B_{i,j}}{\partial y_k} z_{k,j} \right] z_{\bullet,i}\,d\mu(x,y,z)\nonumber \\ & \hspace{-60mm}  - \sum_{i,j}\int_{\Omega\times\Xf\times\Yf}   \left[\frac{\partial \phi}{\partial x_j} +\sum_{k=1}^{n_y} \frac{\partial \phi}{\partial y_k}z_{k,j}  \right]B_{i,j}(x,y) z_{\bullet,i}\,d\mu(x,y,z)\nonumber\\
& \hspace{-60mm} + \sum_{i,j}\int_{\partial\Omega\times\Yf\times\Zf} B_{i,j}(x,y)\phi(x,y)z_{\bullet,i}\eta_j(x)\,d\mu_\partial(x,y,z) = \sum_{k=1}^{n_u}\int_{\Omega\times \Yf} C_{\bullet,k}(x,y)\phi(x,y) d\nu^k(x,y)
\end{align}
and
\begin{equation}\label{eq:thmEq3Cont}
\int_{\partial\Omega\times \Xf \times \Zf} \phi(x,y) G(x,y,z)\,d\mu_\partial(x,y,z) = \sum_{i=1}^{n_\mr{b}}\sum_{k=1}^{n_{u_i}}\int_{\partial \Omega_i\times \Yf} C_{\bullet,k}^i(x,y)\phi(x,y) d\nu_{\partial_i}^k(x,y),
\end{equation}
where $z_{\bullet,i}$ denotes the $i^{\mr{th}}$ column of the matrix variable $z \in \Rb^{n_y\times n}$ and $C_{\bullet,k}$ respectively $C_{\bullet,k}^i$ the $k^{\mr{th}}$ columns of $C$ respectively $C_{\partial_i}$. The symbols $\nu^k$ and $\nu_{\partial_i}^k$ denote the $k^{\mr{th}}$ components of the vector measures $\nu$ and $\nu_{\partial_i}$. 
\end{subequations}
\end{theorem}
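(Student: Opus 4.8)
The plan is to mirror the proof of Theorem~\ref{thm:linearRepGen} essentially verbatim, tracking only the single new ingredient: the control forcing terms $C(x,y)u(x)$ and $C_{\partial_i}(x,y)u_{\partial_i}(x)$ on the right-hand sides of~(\ref{eq:pdeCont}) and~(\ref{eq:bndCont}). Since~(\ref{eq:pde}) and~(\ref{eq:pdeCont}) differ only in their right-hand sides, and the integration-by-parts relation~(\ref{eq:thmEq1}) between $\mu$ and $\mu_\partial$ is derived purely from the Stokes formula~(\ref{eq:stokes}) applied to $\phi(x,y(\cdot))$ and does not involve the PDE dynamics at all, equation~(\ref{eq:thmEq1Cont}) is \emph{identical} to~(\ref{eq:thmEq1}). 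I would therefore simply invoke the first part of the proof of Theorem~\ref{thm:linearRepGen} to obtain~(\ref{eq:thmEq1Cont}) with no change.

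For~(\ref{eq:thmEq2Cont}) I would repeat the derivation of~(\ref{eq:thmEq2}) line by line. The regularity hypotheses are unchanged, and since $u(\cdot)\in L_\infty(\Omega,dx)$ and $C$ is polynomial in $(x,y)$ with $y$ Lipschitz on the compact set $\Omega$, the product $\phi(x,y(x))\,C(x,y(x))\,u(x)$ is in $L_\infty(\Omega)$; hence multiplying~(\ref{eq:pdeCont}) by $\phi(x,y(x))$ and integrating over $\Omega$ is justified exactly as before. The left-hand side produces the same four integral terms as in~(\ref{eq:thmEq2}), obtained through the same integration by parts and chain-rule manipulations of the $B_{i,j}$ terms. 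The only difference is that the right-hand side is now $\int_\Omega \phi(x,y(x))\,C(x,y(x))\,u(x)\,dx$ instead of zero.

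The single step requiring care is rewriting this forcing integral in terms of the control measure $\nu$. Writing $C\,u=\sum_{k=1}^{n_u} C_{\bullet,k}\,u_k$ and noting that the integrand $\phi(x,y)\,C_{\bullet,k}(x,y)$ depends only on $(x,y)$, I would pass to the $(x,y)$-marginal $\bar\mu$ of $\mu$ and use the Radon--Nikodym identity $d\nu=u\,d\bar\mu$ from~(\ref{eq:nuDef}), componentwise $d\nu^k=u_k\,d\bar\mu$, to obtain
\[
\int_\Omega \phi(x,y(x))\,C(x,y(x))\,u(x)\,dx
=\sum_{k=1}^{n_u}\int_{\Omega\times\Yf} C_{\bullet,k}(x,y)\,\phi(x,y)\,d\nu^k(x,y),
\]
which is precisely the right-hand side of~(\ref{eq:thmEq2Cont}).

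Equation~(\ref{eq:thmEq3Cont}) follows in the same way from the boundary conditions~(\ref{eq:bndCont}): I would integrate $\phi(x,y(x))\,G_i(x,y(x),\D y(x))$ against the surface measure on each $\partial\Omega_i$, sum over $i$, and convert the resulting forcing terms $C_{\partial_i}\,u_{\partial_i}$ using $d\nu_{\partial_i}=u_{\partial_i}\,d\bar\mu_{\partial_i}$ from~(\ref{eq:nuiDef}) together with the decomposition~(\ref{eq:boundMeasDecomp}). I do not anticipate a genuine obstacle, since the construction is an immediate generalization of Theorem~\ref{thm:linearRepGen}; the only point to get right is the bookkeeping that the control measures live on $\Omega\times\Yf$ (respectively $\partial\Omega_i\times\Yf$), so that marginalizing out the variable $z$ is consistent with the fact that $C$ and $C_{\partial_i}$ depend only on $(x,y)$.
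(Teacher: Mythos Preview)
Your proposal is correct and matches the paper's approach exactly: the paper's own proof consists of the single sentence ``The proof is analogous to the proof of Theorem~\ref{thm:linearRepGen},'' and you have spelled out precisely how that analogy goes, including the only new bookkeeping step of rewriting the control forcing terms via the Radon--Nikodym definitions~(\ref{eq:nuDef}) and~(\ref{eq:nuiDef}).
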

\begin{proof}
The proof is analogous to the proof of Theorem~\ref{thm:linearRepGen}.
\end{proof}

Note that the equations~(\ref{eq:thmEq1Cont})-(\ref{eq:thmEq3Cont}) are linear in the variables $(\mu,\mu_\partial,\nu,\nu_{\partial_1},\ldots,\nu_{\partial_{n_{\mr{b}}}})$. This will allow us to write down an infinite-dimensional LP relaxation of the optimal control problem~(\ref{opt:ocp_cont}).

Before doing so we describe how to handle the requirement that $\nu \ll \bar \mu$ and $\nu_{\partial_i} \ll \bar \mu_{\partial_i}$ with density (= the control input) in $[0,1]$. This is equivalent to requiring that $\nu \le \bar \mu$ and $\nu_{\partial_i} \le \bar \mu_{\partial_i}$, where $\le$ is the ordinary inequality sign (i.e. $\nu(A) \le \mu(A)$ for all Borel sets $A$). This in turn is equivalent to the existence of nonnegative slack measures $\hat \nu \in \Mc(\Omega\times\Yf)_+$, $\hat\nu_{\partial_i}\in \Mc(\partial\Omega\times\Yf)_+$, $i=1,\ldots,n_{\mr{b}}$, such that
\begin{align*}
\nu + \hat \nu &= \bar{\mu} \\
\nu_{\partial_i} + \hat \nu_{\partial_i} &= \bar{\mu}_{\partial_i}, \;i\in\{1,\ldots,n_{\mr{b}}\}.
\end{align*}
Finally, these equalities are equivalent to
\begin{subequations}
\begin{align}
\int_{\Omega\times \Yf}\phi(x,y)\,d\nu(x,y) + \int_{\Omega\times \Yf}\phi(x,y)\,d\hat\nu(x,y) &= \int_{\Omega\times \Yf\times\Zf}\phi(x,y)\, d\mu(x,y,z) \label{eq:hat1} \\
\int\limits_{\partial\Omega_i\times \Yf}\phi(x,y)\,d\nu_{\partial_i}(x,y) + \int\limits_{\partial\Omega_i\times \Yf}\phi(x,y)\, d\hat \nu_{\partial_i}(x,y) &= \hspace{-3mm}\int\limits_{\partial\Omega_i\times \Yf\times \Zf} \hspace{-3mm}\phi(x,y) \, d\mu_{\partial_i}(x,y,z), \; i\in\{1,\ldots,n_{\mr{b}}\} \label{eq:hat2}
\end{align}
\end{subequations}
holding for all test functions $\phi\in C^\infty(\Omega\times\Yf).$
\subsection{Infinite dimensional LP (control)}
Now we are ready to state the infinite-dimensional linear programming relaxation of~(\ref{opt:ocp_cont}). The relaxation reads
\begin{equation}\label{opt:LPinfCont}
\begin{array}{ll}
\underset{\substack{ \mu,\mu_{\partial_1},\ldots,\mu_{\partial_{n_{\mr{b}}}}, \\ \nu,\nu_{\partial_1},\ldots,\nu_{\partial_{n_{\mr{b}}}}, \\ \hat\nu,\hat\nu_{\partial_1},\ldots,\hat\nu_{\partial_{n_{\mr{b}}}} } }{\mbox{inf}} & J_{\mr{LP}}(\mu,\mu_1,\ldots,\mu_{n_{\mr{b}}},\nu,\nu_1,\ldots,\nu_{n_{\mr{b}}})  \\
\mbox{subject to} &  (\ref{eq:thmEq1Cont}),\; (\ref{eq:thmEq2Cont}),\; (\ref{eq:thmEq3Cont}),\;(\ref{eq:hat1}),\;(\ref{eq:hat2}) \;\;[\text{with}\;\, \mu_\partial = \sum_{i=1}^{n_{\mr{b}}}\mu_{\partial_i}] \quad \forall \, \phi \in C^\infty(\Omega\times\Yf) \vspace{1mm}\\
&\int_{\partial\Omega_i\times \Xf\times \Zf} \psi(x)\,d\mu_{\partial_i}(x,y,z) = \int_{\partial\Omega_i} \psi(x)\,\sigma(x)\quad \forall \psi\in C^\infty(\Omega),\;i\in\{1,\ldots,n_{\mr{b}}\}\vspace{1mm}\\
& (\mu,\mu_{\partial_1},\ldots,\mu_{\partial_{n_{\mr{b}}}},  \nu,\nu_{\partial_1},\ldots,\nu_{\partial_{n_{\mr{b}}}},  \hat\nu,\hat\nu_{\partial_1},\ldots,\hat\nu_{\partial_{n_{\mr{b}}}})  \in \Kc
\end{array}
\end{equation}
where
\begin{align}
J_\mr{LP} = &\int_{\Omega\times \Yf\times \Zf}L(x,y,z)\,d\mu(x,y,z) + \sum_{i=1}^{n_{\mr{b}}}\int_{\partial\Omega\times \Yf\times \Zf} L_{\partial_i}(x,y,z)\,d\mu_{\partial_i}(x,y,z) \nonumber\\ & +\sum_{k=1}^{n_u}\int_{\Omega}L_u^k(x,y) d\nu^k(x,y) + \sum_{i=1}^{n_{\mr{b}}} \sum_{k=1}^{n_{u_i}}\int_{\partial\Omega}L_{u_i}^k(x,y) d\nu_{\partial_i}(x,y)
\end{align}
with $L^k_u$ and $L^k_{u_i}$ denoting the $k^{\mr{th}}$ components of the vector polynomials $L_u$ and $L_{u_i}$ and the convex cone $\Kc$ given by
\begin{align*}
 \Kc = & \; \Mc(\Omega\times \Yf\times \Zf)_+ \times \Mc(\partial\Omega_1\times \Yf\times \Zf)_+\times\ldots\times \Mc(\partial\Omega_{n_{\mr{b}}}\times \Yf\times \Zf)_+ \\
& \times   \; \Mc(\Omega\times \Yf)_+ \times \Mc(\partial\Omega_1\times \Yf)_+\times\ldots\times \Mc(\partial\Omega_{n_{\mr{b}}}\times \Yf)_+ \\
& \times  \;  \Mc(\Omega\times \Yf)_+ \times \Mc(\partial\Omega_1\times \Yf)_+\times\ldots\times \Mc(\partial\Omega_{n_{\mr{b}}}\times \Yf)_+.
\end{align*}

\section{Control design using SDP relaxations}\label{sec:SDPcont}
Since all integrands in the infinite-dimensional LP~(\ref{opt:LPinfCont}) are polynomials and all measures are defined on basic semialgebraic sets, a finite-dimensional SDP relaxation can be constructed following exactly the same procedure as described in Section~\ref{sec:SDPuncont}. The result is the following finite-dimensional SDP:

\begin{equation}\label{opt:SDPcont}
\begin{array}{ll}
\underset{\bs s}{\mbox{inf}} & c_d^\top \bs s  \\
& A_d\bs s = b_d\\
& \bs s \in \Kc_d,
\end{array}
\end{equation}
where
\begin{align*}
\Kc_d = &\; \Mc_d(\Omega\times \Yf\times \Zf)_+ \times \Mc_d(\partial\Omega_1\times \Yf\times \Zf)_+\times \ldots\times \Mc_d(\partial\Omega_{n_{\mr{b}}}\times \Yf\times \Zf)_+ \\
&\times \Mc_d(\Omega\times \Yf)_+ \times \Mc_d(\partial\Omega_1\times \Yf)_+\times \ldots\times \Mc_d(\partial\Omega_{n_{\mr{b}}}\times \Yf)_+ \\
&\times \Mc_d(\Omega\times \Yf)_+ \times \Mc_d(\partial\Omega_1\times \Yf)_+\times \ldots\times \Mc_d(\partial\Omega_{n_{\mr{b}}}\times \Yf)_+
\end{align*}
and where the decision $\bs s$ is partitioned as
\begin{equation}\label{eq:SDPcontVars}
\bs s := [\bs s_\mu^\top ,\;\; \bs s_{\mu_1} ^\top ,\; \ldots ,\;\; \bs s_{\mu_{n_{\mr{b}}}}^\top ,\;\; \bs s_\nu^\top ,\;\; \bs s_{\nu_1} ^\top ,\; \ldots,\; \bs s_{\nu_{n_{\mr{b}}}}^\top,\;\; \bs s_{\hat\nu}^\top ,\;\; \bs s_{\hat \nu_1} ^\top ,\; \ldots,\;\; \bs s_{\hat\nu_{n_{\mr{b}}}}^\top ]^\top,
\end{equation}
corresponding to the truncated moments sequences up to degree $d$ of the respective measures. The optimization problem~(\ref{opt:SDPcont}) is a convex, finite-dimensional, semidefinite program that can be solved by any of the available software such as MOSEK or SeDuMi~\cite{sedumi}. In addition, the matrices and vectors $A_d$, $b$, $c_d$ as well as the cone $\Kc_d$ can be assembled automatically using Gloptipoly 3~\cite{gloptipoly3} starting from a high level description closely resembling the form of the infinite-dimensional LP~(\ref{opt:LPinfCont}).

The following theorem is an exact analogue of~Theorem~\ref{thm:conv}.

\begin{theorem}
Let $p_d$ denote the optimal value of the SDP~(\ref{opt:SDPcont}), let $\hat{p}$ denote the optimal value of the infinite-dimensional LP~(\ref{opt:LPinfCont}) and let $p^\star$ denote the optimal value of the optimal control problem~(\ref{opt:ocp_cont}). Then
\begin{equation}
p_d \le p_{d+1} \le \hat{p}\le p^\star
\end{equation}
for all $d\in\Nb$. In addition,
\begin{equation}
\lim_{d\to\infty}p_d = \hat{p},
\end{equation}
provided that $\Omega$, $\Yf$ and $\Zf$ are compact and the polynomials defining these sets satisfy the Archimedianity condition (see Footnote~\ref{foot:arch}).
\end{theorem}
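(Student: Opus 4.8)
The plan is to mirror the proof of Theorem~\ref{thm:conv} almost verbatim, the only genuinely new ingredient being the additional control and slack measures $\nu,\nu_{\partial_i},\hat\nu,\hat\nu_{\partial_i}$. I would organize the argument into three parts: the inequality $\hat p \le p^\star$, the relaxation and monotonicity inequalities $p_d \le p_{d+1}\le \hat p$, and the convergence $\lim_{d\to\infty} p_d = \hat p$.

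First, for $\hat p \le p^\star$ I would establish the control analogue of Theorem~\ref{thm:LPbound}. Given any $(y(\cdot),u(\cdot))$ feasible in the optimal control problem~(\ref{opt:ocp_cont}), I construct $\mu,\mu_{\partial_i}$ by~(\ref{eq:occupMeas})--(\ref{eq:boundaryMeas}) and $\nu,\nu_{\partial_i}$ by~(\ref{eq:nuDef})--(\ref{eq:nuiDef}); Theorem~\ref{thm:linearRepCont} guarantees that these satisfy~(\ref{eq:thmEq1Cont})--(\ref{eq:thmEq3Cont}). Because the control values lie in the unit box, so that $0\le u\le 1$ almost everywhere, the slack measures $\hat\nu,\hat\nu_{\partial_i}$ defined through the densities $1-u$, $1-u_{\partial_i}$ are nonnegative and make~(\ref{eq:hat1})--(\ref{eq:hat2}) hold; the normalization and cone constraints of~(\ref{opt:LPinfCont}) hold as well. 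The objective $J_{\mr{LP}}$ evaluated at these measures equals $J(y(\cdot),u(\cdot))$ by the change-of-variables identities for the occupation and control measures. Hence the feasible set of~(\ref{opt:ocp_cont}) embeds into that of~(\ref{opt:LPinfCont}) with matching cost, giving $\hat p \le p^\star$.

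Second, the inequalities $p_d \le p_{d+1}\le \hat p$ follow by construction exactly as in Theorem~\ref{thm:conv}: the truncated moment cones $\Mc_d(\cdot)_+$ are outer approximations that tighten with $d$, and increasing $d$ enlarges the set of monomial test functions $\phi,\psi$ enforced in the equality constraints, so~(\ref{opt:SDPcont}) is a relaxation of~(\ref{opt:LPinfCont}) whose feasible set shrinks as $d$ grows. Third, for convergence the crux is again boundedness of feasible truncated moment sequences. For a sequence $\bs s^d$ optimal in~(\ref{opt:SDPcont}), the boundary masses $\bs s^d_{\partial_i}(0)$ are bounded by the normalization constraint with $\psi=1$, and $\bs s^d_\mu(0)$ is bounded via~(\ref{eq:thmEq1Cont}) with $\phi=x_m$ for an index $m$ with $\eta_m\not\equiv 0$, exactly as before. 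The new masses are controlled immediately: taking $\phi=1$ in the coupling constraints~(\ref{eq:hat1})--(\ref{eq:hat2}) gives $\bs s^d_\nu(0)+\bs s^d_{\hat\nu}(0)=\bs s^d_\mu(0)$ and the analogous identity on the boundary, so the nonnegative control and slack masses are bounded by the already-bounded masses of $\mu$ and $\mu_{\partial_i}$. Archimedianity together with the structure of the moment and localizing matrices then yields componentwise boundedness of all the sequences, and a diagonal argument extracts a subsequence converging elementwise to some $\bs s^\star$. Continuity of the minimum eigenvalue places each block of $\bs s^\star$ in $\Mc_d(\cdot)_+$ for every $d$, so by the Putinar positivestellensatz all blocks admit representing measures $\mu^\star,\mu_{\partial_i}^\star,\nu^\star,\nu_{\partial_i}^\star,\hat\nu^\star,\hat\nu_{\partial_i}^\star$. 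By continuity these satisfy all constraints of~(\ref{opt:LPinfCont}) and attain objective value $\lim_{d\to\infty} p_d$; since this value is at least $\hat p$ while $p_d\le \hat p$ for every $d$, we conclude $\lim_{d\to\infty} p_d = \hat p$.

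Given Theorem~\ref{thm:conv}, little here is new, and I expect the argument to be essentially routine bookkeeping rather than to contain a serious obstacle. The one place requiring care is the handling of the extra measures, but the mass-balance equalities~(\ref{eq:hat1})--(\ref{eq:hat2}) render their boundedness immediate rather than demanding a separate estimate. The single point worth verifying is that the limiting equalities $\nu^\star+\hat\nu^\star=\bar\mu^\star$ (and their boundary counterparts) still correctly encode $\nu^\star\le\bar\mu^\star$ in the limit, i.e.\ that passage to the limit does not destroy the slack structure; this is automatic because those constraints are linear equalities, which are closed under elementwise limits of the moment sequences.
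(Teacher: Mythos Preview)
Your proposal is correct and matches the paper's own proof essentially verbatim: the paper likewise declares the argument a copy of the proof of Theorem~\ref{thm:conv}, with the only additional step being the boundedness of the first moments of $\nu,\hat\nu,\nu_{\partial_i},\hat\nu_{\partial_i}$, obtained exactly as you do by setting $\phi=1$ in~(\ref{eq:hat1})--(\ref{eq:hat2}). Your explicit treatment of $\hat p\le p^\star$ via Theorem~\ref{thm:linearRepCont} is slightly more detailed than the paper's terse reference, but the approach is the same.
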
 
\begin{proof}
The proof is a verbatim copy of the proof of Theorem~\ref{thm:conv}, once boundedness of the first components of the moments sequences of $\nu$, $\hat{\nu}$, $\nu_{\partial_i}$, $\hat\nu_{\partial_i}$ is established. But this follows immediately from boundedness of the first component of the moment sequence of $\mu$ and $\mu_{\partial_i}$ and from~(\ref{eq:hat1}) and (\ref{eq:hat2}) with $\phi = 1$.
\end{proof}

\subsection{Controller extraction}
In this section we describe a way to extract a feedback controller from the solution to the SDP~(\ref{opt:SDPcont}).  The idea is to use the fact that the measures $\nu$, $\nu_{\partial_i}$ are absolutely continuous with respect to the measures $\mu$, $\mu_{\partial_i}$ with the density being the control input. The information at our disposal are the truncated approximate moment sequences of these measures obtained from the solution to the SDP~(\ref{opt:SDP}). Let $\bs s^d$ denote this solution for a given even value of $d$, partitioned as in~(\ref{eq:SDPcontVars}). In general, there are a number of ways of obtaining an approximation to a density, given the truncated moment sequences. In our case, we use a simple moment matching technique with a polynomial density.  Therefore, we seek polynomials $\kappa^d(x,y)$ and $\kappa_{\partial_{i}}^d(x,y)$ of degree $d/2$ such that
\begin{subequations}
\begin{align}
\ell_{\bs s_{\nu}^d}(\beta_{d/2}(x,y)) &= \ell_{\bs s_{\mu}}(\beta_{d/2}(x,y)\kappa^d(x,y)) \label{eq:momMatch1aux} \\
\ell_{\bs s_{\nu_{i}}^d}(\beta_{d/2}(x,y)) &= \ell_{\bs s_{\mu_i}}(\beta_{d/2}(x,y)\kappa^d_{\partial_{i}}(x,y)), \label{eq:momMatch2aux}
\end{align}
\end{subequations}
where $\ell(\cdot)$ is the Riesz functional defined in~(\ref{eq:riesz}) and $\beta_{d/2}$ is the vector of monomials in $(x,y)$ up to the total degree $d/2$. Denoting $c_{\kappa}$ and $c_{\kappa_i}$ the coefficients of $\kappa$ and $\kappa^d_{\partial_i}$, we can write $k^d = \beta_{d/2}^\top c_{\kappa}$ and $k^d_{\partial_i} = \beta_{d/2}^\top c_{\kappa_i}$. Inserting these into~(\ref{eq:momMatch1aux}) and (\ref{eq:momMatch2aux}) leads to
\begin{subequations}
\begin{align}
\tilde{\bs s}_{\nu}^d &= M(\bs s_{\mu}^d)c_{\kappa} \label{eq:momMatch1} \\
\tilde{\bs s}_{\nu_i}^d &= M(\bs s_{\mu_i}^d)c_{\kappa_i} \label{eq:momMatch2},
\end{align}
\end{subequations}
\looseness-1 where $\tilde{\bs s}_{\nu}^d$ respectively $\tilde{\bs s}_{\nu_i}^d$ denote the first $\binom{n+d/2}{n}$ components of $\bs s_{\nu}^d$ respectively $\bs s_{\nu_i}^d$ (i.e., all moments up to degree $d/2$) and $M(\bs s_\mu^d)$ and $M(\bs s_{\mu_i}^d)$ denote the moment matrices associated to $\bs s_{\mu}^d$ and $\bs s_{\mu_i}^d$ defined in~(\ref{eq:momMatOnlyDef}). The equations (\ref{eq:momMatch1}) and (\ref{eq:momMatch2}) are a linear system of equations that can be solved for $c_k$ and $c_{k_i}$  (exactly if the moment matrices are invertible or in a least-squares sense otherwise), thereby obtaining the polynomial feedback controllers $\kappa^d$ and $\kappa^d_{\partial_i}$. The control inputs $u(x)$ and $u_{\partial_i}(x)$ for the PDE~(\ref{eq:pdeEntireCont}) are then given in a feedback form by
\begin{subequations}
\begin{align}
u(x) &= \kappa^d(x,y(x)), \label{eq:contPolyDist}\\ 
u_{\partial_i}(x) &= \kappa^d_{\partial_i}(x,y(x)). \label{eq:contPolyBnd}
\end{align}
\end{subequations}

\section{Higher-order PDEs}\label{sec:higherOrder}
In this section we briefly describe how the presented approach extends to higher-order PDEs. We do not present a general theory but rather demonstrate the approach on an example. The idea is to introduce additional variables for higher order derivatives. Consider, for example,  the Dym equation~\cite{strauss1990nonlinear}
\[
\frac{\partial y_1}{\partial x_1}	- y_1^3\frac{\partial^3 y_1}{\partial x_2^3} = 0,
\]
where $x_1$ typically represents the time, $x_2$ is the spatial variable and $y_1(x_1,x_2)$ is the unknown function. We introduce the variables \[
y_2  = \frac{\partial y_1}{\partial x_2},\quad y_3  = \frac{\partial y_2}{\partial x_2}
\]
with which we obtain
\begin{align*}
\frac{\partial y_1}{\partial x_1}	- y^3 \frac{\partial y_2}{\partial x} &= 0, \\
y_2  - \frac{\partial y_1}{\partial x_2} &= 0, \\
y_3  - \frac{\partial y_2}{\partial x_2}  & = 0.
\end{align*}
This is a system of PDEs of the form~(\ref{eq:pde}) with
\[
F(x,y,\Dc y) = \begin{bmatrix} \frac{\partial y_1}{\partial x_1}	- y^3 \frac{\partial y_2}{\partial x_2} \\ y_2  - \frac{\partial y_1}{\partial x_2} \\ y_3  - \frac{\partial y_2}{\partial x_2} \end{bmatrix},\quad B_{i,j} = 0,
\]
where $y = (y_1,y_2,y_3)$ and $x=(x_1,x_2)$. From here, the approach proceeds as described in the previous sections. Note, however, that in this case, most of the derivatives of $y$ appear linearly in $F$ and therefore the computational complexity of the approach can be significantly reduced using the method described in Section~\ref{sec:complexRedLinearDer}.

\section{Complexity analysis $\&$ reduction}\label{sec:complex}
In this section we briefly discuss the computational complexity of the presented approach and describe several means of complexity reduction.

The asymptotic complexity of solving the SDPs (\ref{opt:SDP}) or (\ref{opt:SDPcont}) is governed by the dimension of the sets that the measures in the LPs (\ref{opt:LPinf}) or (\ref{opt:LPinfCont}) are supported on and by the relaxation degree $d$. This dimension is the same for the uncontrolled and controlled case and therefore we focus our discussion on the former, the conclusions being exactly the same for the latter. Without any further structure of the PDE~(\ref{eq:pdeEntire}), the measures appearing in the LP~(\ref{opt:LPinf}) are supported on sets of dimension 
\begin{equation}\label{eq:nvar}
n_{\mr{var}} = n + n_y + n_y\cdot n\,.
\end{equation}
The dimension of the largest cone $\Mc_d(\cdot)$ defining $\Kc_d$ in~(\ref{opt:SDP}) is then $N(N+1)/2$, where
\[
N = \binom{n_{\mr{var}} + d/2}{n_{\mr{var}}}.
\]
Assuming no structure is exploited and given the current state of the art of interior point semidefinite programming solvers (i.e., MOSEK), the number $N$ should not exceed $\approx 1000$. The value of $N$ for different values of $n_{\mr{var}}$ and $d$ are summarized in Table~\ref{tab:complex}. We note that having $N\le 1000$ does not guarantee that the SDP problem can be accurately solved; this depends also on the number of equality constraints (i.e., the number of rows of $A_d$) as well as on the numerical conditioning of the problem.

\begin{table}[h]
\centering
\caption{\small \rm The value of $N$ for different values of $n_{\mr{var}}$ and $d$. The last three columns corresponds to the values for the incompressible Navier-Stokes equations in three spatial dimensions with and without complexity reduction techniques described in Section~\ref{sec:complexRedLinearDer}.}\label{tab:complex}\vspace{2mm}
{\small
\begin{tabular}{c|cccccccc}
\toprule
$n_{\mr{var}}$&         4    &       6       &     8      &       10     &    12     &    17       &       21            & 24        \\\midrule
 $d = 4$  &               15   &      28     &      45     &      66     &     91    &     171    &       253          & 325     \\
  $d = 6$  &              35   &      84     &     165    &    286     &   455    &    1140   &       2024        & 2925          \\    
   $d = 8$  &             70   &     210    &    495     &    1001   & 1820    &    5985   &       12650      & 20475          \\
\bottomrule
\end{tabular}
}
\end{table}

\paragraph{First order methods} We note that, for larger values of  $N$ (say $N \le 5000$), one can resort to first order optimization methods for semidefinite programming, the state of the art being the augmented Lagrangian-based solver SDPNAL+~\cite{sdpnalPLus}. These solvers, however, often do not achieve the accuracy of interior point solvers and in general struggle more with numerical conditioning, which is an issue for SDP relaxations formulated in the monomial basis, as is the case here. Therefore, if these solvers are to be successfully used for the problems studied here, a different  basis may be required (the choice of which is not trivial in a multivariate setting), or an iterative preconditioning applied,  similarly to~\cite{nie2012regularization}.

\subsection{Complexity reduction}\label{sec:complexRed}
In view of Table~\ref{tab:complex} and the present status of state-of-the-art SDP solvers, the reader can realize how crucial is the number of variables 
$(x,y,z)$ when implementing the SDP relaxations described earlier. 
For instance, in some cases, even reducing the number of variables
by only one, may allow for implementing an additional
step in the computationally expensive SDP hierarchy. 
We briefly describe in Section~\ref{sec:complexRedLinearDer} cases where one may reduce the number of variables.

Another route briefly described in Section~\ref{sec:sparsAndDeg} (possibly combined with reduction of variables)
is to exploit  sparsity coming from the weak coupling between the variables $(x,y)$ and~$z$. Indeed such a weak coupling has already been successfully exploited in SDP hierarchies for polynomial optimization (e.g., \cite{lasserre2006convergent,waki2006sums}). Also the fact that the variable $z$ has a low degree (1 or 2) in the problem description can be exploited.

%There are several ways of reducing the computational complexity. Here we discuss two of them.

\subsubsection{Derivatives appearing linearly}\label{sec:complexRedLinearDer}
The first opportunity for complexity reduction shows itself when some of the derivatives appear only linearly in the PDE considered and can be solved for explicitly. In that case, the complexity can be reduced by introducing additional variables $z_{i,j} = \frac{\partial y_i}{\partial x_j}$ only for those derivatives that appear non-linearly. We do not develop a general framework for this but rather demonstrate the approach on an example. Consider the Burgers' equation (e.g., \cite{evansPDEbook})
\[
\frac{\partial{y}}{\partial x_1} + y\frac{\partial{y}}{\partial x_2}  = 0,
\]
where $x_1$ typically represents the time and $x_2$ the space. The only derivative appearing nonlinearly is $ \frac{\partial{y}}{\partial x_2}$, whereas $ \frac{\partial{y}}{\partial x_1}$ appears linearly and can be expressed as $ \frac{\partial{y}}{\partial x_1} = -y\frac{\partial{y}}{\partial x_2}$. Setting $z = \frac{\partial{y}}{\partial x_2}$ we obtain the system of PDEs
\begin{align*}
\frac{\partial{y}}{\partial x_1} + yz  = 0, \\
 \frac{\partial{y}}{\partial x_2} -  z = 0.
\end{align*}
Given any test function $\phi \in C^\infty(\Omega \times \Yf
)$, we get
\[
\
\int_{\partial\Omega} \phi(x,y(x))\eta_1(x)\,d\sigma(x)  = \int_{\Omega} \frac{d}{d_{x_1}}\phi(x,y(x))\,dx = \int_{\Omega} \frac{\partial \phi}{\partial x_1} +   \frac{\partial \phi}{\partial y}\frac{\partial y}{\partial x_1} \,dx = \int_{\Omega} \frac{\partial \phi}{\partial x_1} - \frac{\partial \phi}{\partial y}yz \,dx 
\]
and
\[
\int_{\partial\Omega} \phi(x,y(x))\eta_2(x)\,d\sigma(x)  = \int_{\Omega} \frac{d}{d_{x_2}}\phi(x,y(x))\,dx = \int_{\Omega} \frac{\partial \phi}{\partial x_2} +   \frac{\partial \phi}{\partial y}\frac{\partial y}{\partial x_2} \,dx = \int_{\Omega} \frac{\partial \phi}{\partial x_2} + \frac{\partial \phi}{\partial y}z \,dx.
\]
Applying~(\ref{eq:occupMeasFun}) and (\ref{eq:boundaryMeasFun}), we get
\begin{subequations}
\begin{equation}
\int_{\partial\Omega \times \Yf \times \Zf} \phi(x,y)\eta_1(x)\,d\mu_{\partial}(x,y,z) = \int_{\Omega \times \Yf\times \Zf} \frac{\partial \phi}{\partial x_1} - \frac{\partial \phi}{\partial y}yz \,d\mu(x,y,z) \label{eq:red1}
\end{equation}
\begin{equation}
\int_{\partial\Omega\times \Yf \times \Zf} \phi(x,y)\eta_2(x)\,d\mu_{\partial}(x,y,z)  = \int_{\Omega\times \Yf\times \Zf} \frac{\partial \phi}{\partial x_2} + \frac{\partial \phi}{\partial y}z \,d\mu(x,y,z). \label{eq:red2}
\end{equation}
\end{subequations}
These equations replace equations~(\ref{eq:thmEq1}) and (\ref{eq:thmEq2}). Importantly, there are four variables appearing in the measures $\mu$ and $\mu_\partial$ in~(\ref{eq:red1}) and (\ref{eq:red2}), which is one less compared to when (\ref{eq:thmEq1}) and (\ref{eq:thmEq2}) are used to linearly represent this PDE. In addition, provided that the boundary function $G$ in~(\ref{eq:bnd}) does not depend on the derivatives of $y$, the variable $z$ can be removed from $\mu_\partial$, further reducing the complexity.

 This process can be carried out in a much more general setting. For example, using the complexity reduction described here, the total number of variables for the incompressible Navier-Stokes equations in one temporal and three spatial dimensions is~17, compared to 24 that would be required using the general formulation. Indeed, we have $n_y = 4$ (three components of the velocity field and the scalar pressure) and $n = 4$ (one temporal plus three spatial dimensions), leading to $n_{\mr{var}} = 24$ in~(\ref{eq:nvar}). Using the fact that the temporal derivatives of the velocity components appear linearly and can be explicitly solved for leads to the reduction to $n_{\mr{var}} = 21$. In addition, choosing the test functions $\phi$ to be independent of the pressure allows for a further reduction to $n_{\mr{var}} = 17$ (but possibly at the cost of looser relaxation).  These reductions translate to a significant decrease in the size of the SDP matrices $N$ as documented by the last two columns of Table~\ref{tab:complex}.

\subsubsection{Sparsity $\&$ degree bounding}\label{sec:sparsAndDeg}
Another way to reduce complexity is to exploit structure of the equations~(\ref{eq:thmEq1})-(\ref{eq:thmEq3}) (the same considerations hold for the controlled case (\ref{eq:thmEq1Cont})-(\ref{eq:thmEq3Cont})). The main source of structure to be exploited is the way that the variable $z$ enters the equations.

\paragraph{Degree bounding} First, notice that the degree of $z$ is at most two (and equal to one if $B_{i,j} = 0$). Therefore a natural way to reduce complexity of the SDP relaxation is to bound the degree of the moments corresponding to the monomials containing $z$. That is, we consider a hierarchy indexed by $d$ and $\tilde{d}$, where $\tilde d \ge 2$ is either fixed to a constant value or a fraction of $d$; the moments that appear in the reduced-degree SDP hierarchy are then
\[
\int_{\Omega\times\Yf\times\Zf} x^\alpha y^\beta z^\gamma \, d\mu(x,y,z)
\]
such that $|(\alpha,\beta,\gamma)| \le d$ and $|\gamma| \le \tilde d$ (and analogously for $\mu_{\partial_i}$). Fixing $\tilde{d}$ to a small value, e.g., $2$ or $4$, will dramatically reduce the size of the SDP matrices $N$, especially in those situations with a large dimension of $z$ (i.e., when there is a large number of the derivatives of $y$ appearing nonlinearly in the PDE~(\ref{eq:pdeEntire}) or (\ref{eq:pdeEntireCont})). Note, however, that this complexity reduction comes at a cost of potentially worsening the bounds obtained from solving~(\ref{opt:SDP}) or the controllers obtained from~(\ref{opt:SDPcont}). Nevertheless, as long as both $d$ and $\tilde d$ tend to infinity, the convergence results of Theorem~\ref{thm:conv} are preserved.

\paragraph{Sparsity}
Another possibility to reduce the computational complexity is to exploit the fact that the variable $z$ enters the equations~(\ref{eq:thmEq1})-(\ref{eq:thmEq3}) in a way that lends itself to the sparse SDP relaxation~\cite{waki2006sums}. Indeed, as long as the degree of $F$ and $G$ in the $z$ variable is at most two, all monomials appearing in these equations contain at most the products of two components of~$z$ (i.e., monomials of the form $x^\alpha y^\beta z_{i} z_{j}$); here we assume for notational simplicity a linear indexing of the matrix variable $z$. Hence, one can use the sparse SDP relaxation of \cite{waki2006sums}, which in our case of a moment problem (rather than the dual sum-of-squares problem consider in~\cite{waki2006sums}) translates to introducing measures $\tilde \mu_{i,j}$ supported on the variables $(x, y, z_i, z_j)$ and imposing the consistency constraints requiring that the $(x,y,z_i)$ marginals of the measures coincide for all $i \le n_z$, where $n_z$ is the dimension of the $z$ variable (equal to $n_y\cdot n$ if all derivatives of $y$ appear nonlinearly in~(\ref{eq:pdeEntire})). This procedure leads to a dramatic reduction in the size of the SDP matrices to
\[
N_{\mr{sparse}} = \binom{\tilde n_{\mr{var}} + d/2}{\tilde n_{\mr{var}}},
\]
where 
\[
\tilde n_{\mr{var}} = n + n_y + 2,
\]
which is tractable for most computational physics applications. We note that the number of measures introduced in this way is $n_z(n_z-1)/2$, leading to the same increase in the number of blocks of the SDP cone $\Kc_d$ in~(\ref{opt:SDP}). However, in most situations, the reduction of the block size achieved in this way is far more significant in terms of computational complexity than the increase in the number of these blocks.

%A partial remedy is to group\footnote{Here we assumed for simplicity that $n_z$ is divisible by $n_z'$; otherwise, a partition to groups of unequal size can be utilized.} the components of $z$ into groups of size $ n_z'$, leading to $(n_z/n_z')!$  SDP blocks of size
%\[
%N_{\mr{sparse}}' = \binom{\tilde n_{\mr{var}}' + d/2}{\tilde n_{\mr{var}}'},
%\]
%with
%\[
%\tilde n_{\mr{var}}' = n + n_y + 2n_z'.
%\]
%By choosing the group size $n_z'$ we therefore tradeoff the size of the SDP blocks comprising the cone $\Kc_d$ and the number of these blocks, both of which influence the computational complexity of the resulting SDP. 

Convergence results of Theorem~\ref{thm:conv} are preserved as long as the the so-called running intersection property~\cite{lasserre2006convergent} is satisfied. See also~\cite{nie2008sparse} for a discussion of computational complexity of different sparse relaxation techniques.

\section{Numerical examples}\label{sec:numEx}
\subsection{Burgers' equation -- analysis}\label{sec:burgAnal}
We consider the Burgers' equation (e.g., \cite{evansPDEbook})
\begin{equation}\label{eq:burgUncont}
\frac{\partial{y}}{\partial x_1} + y\frac{\partial{y}}{\partial x_2}  = 0,
\end{equation}
where $x_1$ typically represents the time and $x_2$ is the spatial variable.  The goal is to find bounds on the functional~(\ref{eq:obj}) evaluated along the solutions of the PDE~(\ref{eq:burgUncont}). The domain  is given by $\Omega = [0,T]\times [0,L]$ with $T = 5$ and $L = 1$. For numerical reasons, we scaled the $x_1$ variable such that $x_1 \in [0,1]$ (when working with the monomial basis, we always advise to scale all variables to unit boxes or balls). We consider a periodic boundary condition on the spatial variable $x_2$, i.e., $y(x_1,0) = y(x_1,L)$ for all $x_1\in [0,T]$; this boundary condition is imposed as described in Section~\ref{sec:periodicBnd}. We also impose the initial condition $y(0,x_2) = y_0(x_2) =  10(x_2(1 - x_2))^2$; this Dirichlet-type initial condition is imposed as described in Section~\ref{sec:nonPolBnd} (despite being polynomial). For the objective functional we first choose $L(x,y) = y^2$ and $L_\partial(x,y) = 0$ in~(\ref{eq:obj}), i.e., we are seeking bounds on  $\int_{\Omega} y(x)^2\,dx$, which can be interpreted as the average kinetic energy of the solution. It is straightforward to prove that in this case the moments $a_k(x_1) =  \int_0^L y^k(x_1,x_2)\,dx_2$ are preserved (i.e., $\frac{d}{d_{x_1}}a_k(x_1) = 0$) for the solution of the PDE~(\ref{eq:burgUncont}). Therefore
\[
\int_{[0,T]\times [0,L]} \hspace{-10mm} y(x)^2\,dx = T a_2(0) = \int_0^L \hspace{-2mm} y^2(0,x_2)\,dx_2 = 5\int_{0}^1 [10(x_2(1 - x_2))^2]^2\,dx_2 = \frac{50}{63} \approx 0.79365079365.
\]

Solving the SDP relaxation~(\ref{opt:SDP}) for $d = 4$ results in 
\[
\hat{p}_{\mr{inf},d}^{\mr{SDP}} \approx 0.79365079357, \quad \hat{p}_{\mr{sup},d}^{\mr{SDP}} \approx0.79365080188,
\]
i.e., achieving precision of nine respectively six significant digits for the lower and upper bounds. However, this extremely high accuracy is due to the special special choice of the objective functional along with the invariance of $a_2(x_1)$ and cannot be expected in general. Indeed, the invariance condition implies that $\bs s_{(0,2)} = T \int_{0}^L y_0(x_2)\,dx_2 (=50/63)$ is among the constraints of the SDP~(\ref{opt:SDP}) and hence the objective functional $c^\top \bs s = \bs s_{(0,2)}$ is fully specified by equality constraints only.

Next, we consider an objective functional where the value of the objective functional is not fully determined by the constraints. We choose $L(x) = x_2^2y^2$ and $L_{\partial} = 0$, i.e., we want to find bounds on $\int_{[0,T]\times [0,L]} x_2^2 y^2(x_1,x_2)\, dx_1dx_2  $. The results are summarized in Table~\ref{tab:bndsx2y2}. We have set $\Yf = \Rb$ and $\Zf = \Rb$, since we assume to have no a priori bound on the supremum of $|y|$ and since $\frac{\partial y}{\partial x_2}$ is known to be discontinuous in this case; therefore, the convergence result of Theorem~\ref{thm:conv} does not apply in this case. Nevertheless we observe an increasing accuracy of the bounds.

\begin{table}[h]
\centering
\caption{\small \rm Uncontrolled Burgers equation: Upper and lower bounds on the functional~(\ref{eq:obj}) with $L= x_2^2y^2$ and $L_\partial = 0$.}\label{tab:bndsx2y2}\vspace{2mm}
{\small
\begin{tabular}{ccccccc}
\toprule
$d$                            &         4        &       6            &       8              \\\midrule
  Lower bound (SeDuMi)  &      $0.206$   &  $0.263$    &           $0.276$   \\
    Upper bound (SeDuMi)  &    $0.380$   &  $0.297$    &           $0.283$      \\\midrule
    Parse time (Gloptipoly 3)  & 2.91s & 3.41 s & 6.23 s \\ 
          SDP solve time (SeDuMi / MOSEK)   &   $2.62\, /\, 1.63 \,\mr{s}$    &  $2.61 \, / \, 1.32 \,\mr{s}$     &           $20.67 \, / \, 7.05^\star\,\mr{s}$        \\
\bottomrule
\end{tabular}
}
\end{table}

\renewcommand\thefootnote{}
\footnotetext{$^\star$ For $d = 8$ the MOSEK solver reported an ill posed dual problem even though the objective value converged to a valid bound (up to a numerical roundoff error). This may be caused by the absence of the Slater condition. The investigation of the dual SDP, its theoretical interpretation, practical use and numerical properties are left for future work.}
\renewcommand\thefootnote{\arabic{footnote}}

\subsection{Burgers' equation -- control}\label{sec:burgCont}
Now we turn to controlling the Burgers' equation. We consider distributed control, i.e.,
 \begin{equation}\label{eq:burgCont}
\frac{\partial{y}}{\partial x_1} + y\frac{\partial{y}}{\partial x_2}  = u(x_1,x_2),
\end{equation}
where $u(\cdot,\cdot)$ is the control input to be designed subject to the constraint $u(x_1,x_2) \in [-1,1]$. The domain is given by $\Omega = [0,T]\times [0,L]$ with $T = 3$ and $L = 1$ and we consider a periodic boundary constraint on the $x_2$ variable, enforced as described Section~\ref{sec:burgAnal}. The control goal is the minimization of the energy of the solution  $\int_{[0,T]\times [0,L]} y(x_1,x_2)^2\, dx$ starting from the initial condition $y(0,x_2) = y_0(x_2) = 10(x_2(1 - x_2))^2$. This is enforced by selecting $L(x,y) = y^2$ and $L_{\partial} = L_{u}^k = L_{u_i}^k = 0$  in the objective functional~(\ref{eq:objCont}); the initial condition is imposed as described in Section~\ref{sec:nonPolBnd}. We solve~(\ref{opt:SDPcont}) with $d = 6$, where we used the complexity reduction method described in Section~\ref{sec:complexRedLinearDer} to derive the SDP.  From the solution to the SDP, we extracted a polynomial controller of degree three of the form~(\ref{eq:contPolyDist}) using the moment matching equation~(\ref{eq:momMatch1}). For the closed-loop simulation, the PDE was discretized using a finite difference scheme with time step (i.e., for $x_1$) of $0.01\,\mr{s}$ and spatial discretization of the $x_2$ variable with 100 grid points. We observe a stable behavior with the the solution converging to zero as $x_1$ (= time) increases.

\begin{figure}[h!]

    	\begin{picture}(50,160)
	\put(15,-20){\includegraphics[width=75mm]{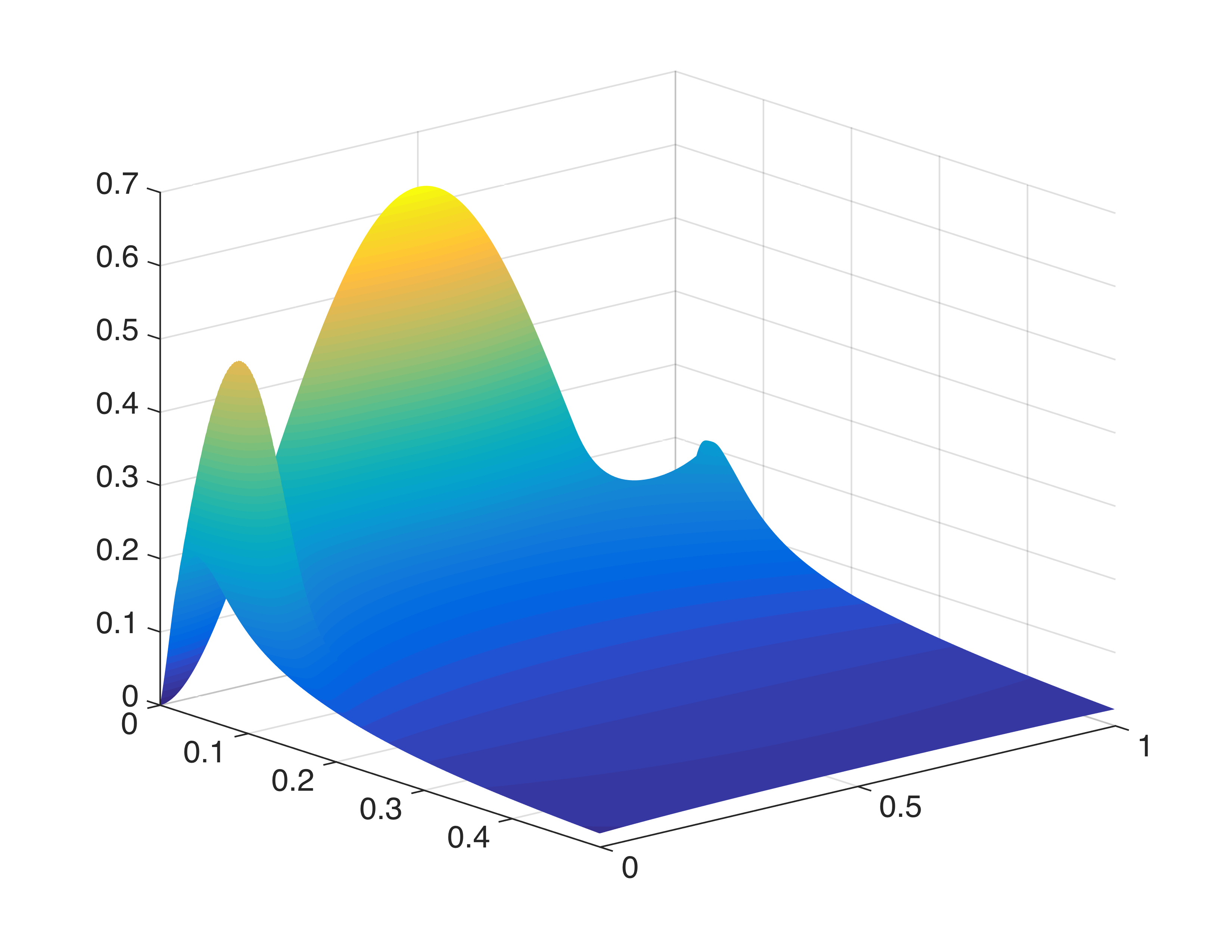}}
			\put(250,-20){\includegraphics[width=80mm]{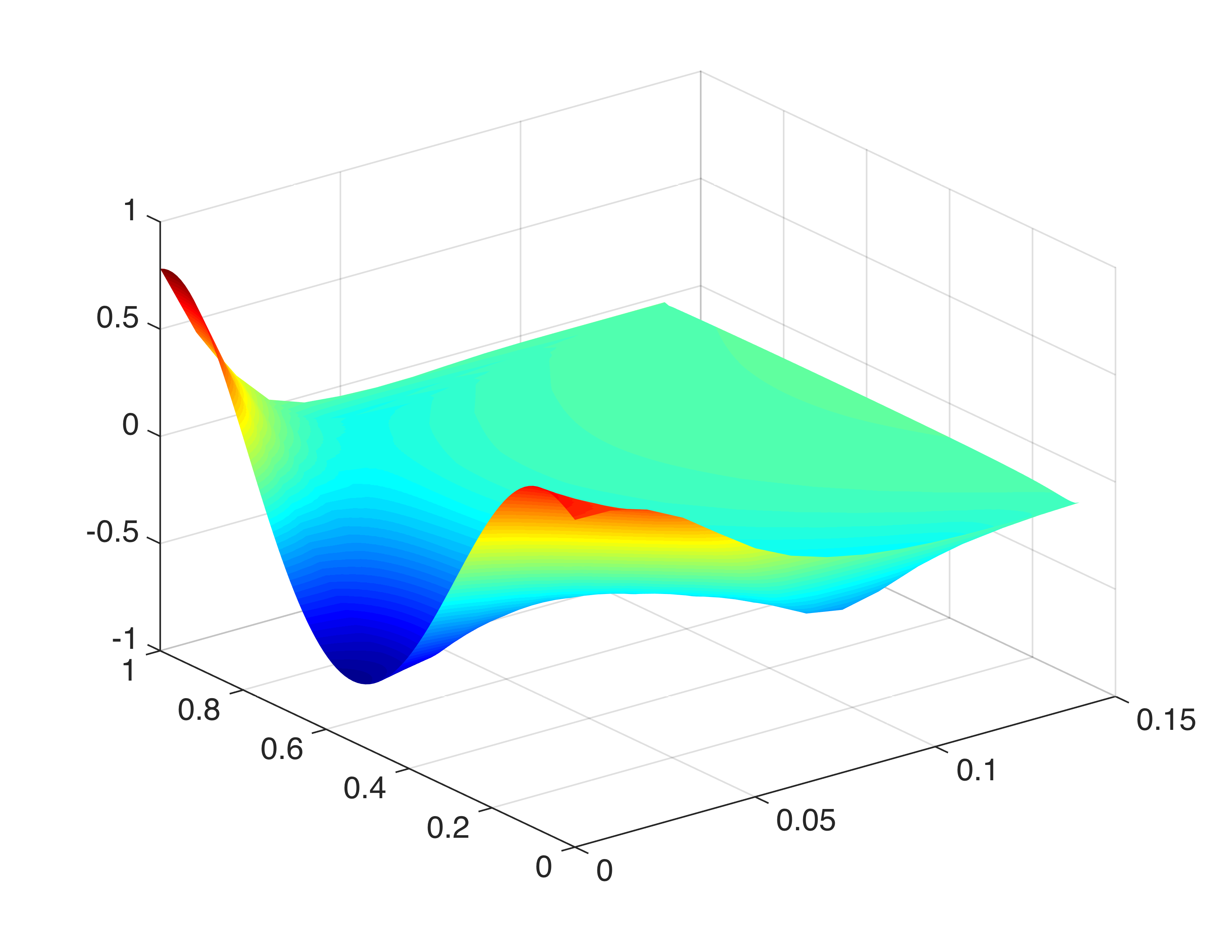}}

	\put(40,0){\rotatebox{-12}{\footnotesize{$x_1$ (=time)}}}
	\put(175,-5){\footnotesize{$x_2$}}
	\put(-13,60){\footnotesize{$y(x_1,x_2)$}}

	\put(290,5){\footnotesize{$x_2$}}
	\put(410,-4){\rotatebox{11}{\footnotesize{$x_1$ (=time)}}}
	\put(222,70){\footnotesize{$u(x_1,x_2)$}}
	\end{picture}

    \caption{\footnotesize Control of Burgers' equation~(\ref{eq:burgCont}). The control input $u(x_1,x_2)$ is given by $u(x_1,x_2) =\kappa^d(x_1,x_2,y(x_1,x_2))$, where $\kappa^d$ is a polynomial of degree three in $(x_1,x_2,y)$. Time to solve the SDP~(\ref{opt:SDPcont}) was $\approx$ 1.2\,s, using Matlab + MOSEK 8, 2$\,$GHz intel i7. Evaluation  of the polynomial feedback control law for each time step on the entire spatial grid of 100 points takes approximately $ 1.5\,\mr{ms}$.
   } \label{fig:burgerMeas}
\end{figure}

\section{Conclusion}\label{sec:conclusion}
We have presented a convex-optimization-based method for analysis and control of nonlinear partial differential equations (PDEs). The method proceeds by embedding the problem at hand to an infinite-dimensional space of Borel measures, with the so called occupation and boundary measures as variables. This leads to an infinite-dimensional linear programming problem (LP), which is  then approximated by a sequence of finite-dimensional convex semidefinite programming problems (SDPs), with proven convergence to the optimal value of the LP. The solutions to these SDPs provide either bounds on (possibly nonlinear) functionals of solutions to the PDE in the uncontrolled case, or polynomial feedback controllers in the controlled case. The major advantage of the approach is its overall convexity and the absence of spatio-temporal gridding. We have discussed computational complexity of the approach and outlined several ways of complexity reduction, exploiting the structure of the problem.

There are several open questions left for future research. The first one is the equivalence of the infinite-dimensional LP to the original problem. The starting point here may be the theory of weak-strong solutions~\cite{brenier2011weak}, originated in the work~\cite{lions1996mathematical}. In these works, additional convex constraints are introduced, ensuring the uniqueness of the weak (i.e., measure-valued) solution whenever the strong solution is unique. These so-called entropy constraints could be added to the infinite-dimensional LP considered here, thereby proving the equivalence, at least in the uncontrolled case and for the class of Euler PDEs considered in~\cite{brenier2011weak}. The situation in the more general setting considered in this work is more complex and remains an open question, especially in the controlled case.

Another question is that of computational complexity reduction. We have outlined several of the most obvious ways of doing so in Section~\ref{sec:complexRed}. Nevetheless, we believe that a more detailed inspection of the problem structure (for example, for a particular class of PDEs studied) would lead to further reduction. Numerical conditioning of the resulting SDPs as a function of the problem data is also an important and not well understood question. Finally, a problem-adapted basis is very likely to be superior to the generic monomial basis used here in terms of numerical conditioning, thereby allowing the SDP relaxations to be accurately computed for higher values of $d$, possibly with first-order optimization methods that scale much more favorably with the problem size but struggle with numerical conditioning.

\section{Acknowledgments}
The first author would like to thank Hassan Arbabi for kindly providing the numerical solver for the Burgers' equation as well as for an insightful discussion and comments on the paper.

The authors would also like to thank Swann Marx for pointing out the work~\cite{diperna1985measure} and a stimulating discussion on the topic. This work also benefited from interesting discussions with Martin Kru\v z\'{\i}k and Josef M\'alek.

The research of M. Korda was supported by the Swiss National Science Foundation under grant P2ELP2\_165166. The research of D. Henrion and J.B. Lasserre was funded by the European Research Council (ERC) under the European’s Union Horizon 2020 research and innovation program (grant agreement 666981 TAMING).

\bibliographystyle{abbrv}

\end{document}